\documentclass[11pt,a4paper]{article}

\usepackage[utf8]{inputenc}
\usepackage[T1]{fontenc}
\usepackage{lmodern}
\usepackage{amsmath,amssymb,amsthm,mathtools}
\usepackage{mathrsfs}
\usepackage{geometry}
\usepackage{enumitem}
\usepackage{hyperref}
\usepackage{microtype}
\usepackage{comment}
\usepackage{mathtools}

\hypersetup{colorlinks=true,linkcolor=blue,citecolor=blue,urlcolor=blue}

\newtheorem{theorem}{Theorem}[section]
\newtheorem{proposition}[theorem]{Proposition}
\newtheorem{lemma}[theorem]{Lemma}
\newtheorem{corollary}[theorem]{Corollary}
\theoremstyle{definition}
\newtheorem{definition}[theorem]{Definition}
\newtheorem{assumption}[theorem]{Assumption}
\theoremstyle{remark}
\newtheorem{remark}[theorem]{Remark}

\newcommand{\R}{\mathbb{R}}
\newcommand{\Og}{\mathcal{O}_g}

\newcommand{\Vol}{\operatorname{Vol}_g}
\newcommand{\divg}{\operatorname{div}_g}
\newcommand{\gradg}{\nabla_g}

\newcommand{\plapg}{\Delta_{p,g}}

\newcommand{\normg}[1]{\lvert #1\rvert_g}
\newcommand{\ipg}[2]{\left\langle #1,#2\right\rangle_g}
\newcommand{\dd}{\mathrm{d}}

\title{\textbf{Shape Optimization and Bernoulli Free Boundary Problems\\
for the Riemannian $p$-Laplacian}}
\author{Ababacar Sadikhe DJITE$^{1,\,}$\footnote{ababacarsadikhe.djite@ucad.edu.sn} ,  Diaraf SECK$^{1,\,}$ \footnote{diaraf.seck@ucad.edu.sn}\\\\
$^{1}$ Ecole Doctorale de Math\'ematiques et Informatique U.C.A.D. Dakar,  S\'en\'egal\\
Laboratoire des Math\'ematiques de la D\'ecision et \\
d'Analyse Num\'erique, BP 16889, Dakar Fann, S\'enegal.}

\date{\today}

\begin{document}

\maketitle

\begin{abstract}
We study an exterior Bernoulli-type free boundary problem associated
with the Riemannian $p$-Laplacian on a compact Riemannian manifold.
Following the shape-optimization strategy of Ly and Seck \cite{LySeck2003}, we formulate
the corresponding volume-constrained shape optimization problem,
derive its first shape derivative, obtain the overdetermined Bernoulli
condition through a Lagrange multiplier, and establish a conditional
monotonicity result under a Riemannian contraction hypothesis. We also
illustrate the abstract framework on the round sphere, where a one-parameter family of geodesic balls gives an explicit radial
construction of the $p$-harmonic states, the Bernoulli constant, and the
boundary-flux transformation under contraction. The final
successive-approximation argument requires additional geometric and
boundary-flux stability assumptions.
\end{abstract}

\noindent\textbf{Keywords:}
Bernoulli free boundary problem, $p$-Laplacian, shape optimization,
Riemannian manifold, Lagrange multiplier, monotonicity.\\
{\bf Mathematical classification subject}: 49Q10, 53B20
\section{Introduction}

We consider an exterior free boundary problem of Bernoulli type
associated with the $p$-Laplacian. Such problems couple a nonlinear
elliptic equation in an unknown domain with an overdetermined
condition on its free boundary. In the Euclidean setting, the
Bernoulli problem for the $p$-Laplacian has been studied from several
different viewpoints.

Acker and Meyer established uniqueness, geometric properties, and a
successive approximation procedure for Bernoulli-type free boundary
problems associated with the $p$-Laplacian \cite{AckerMeyer1995}.
Their work also provides convergence results for the trial free
boundary method. In the convex setting, Henrot and Shahgholian
established existence of classical solutions for Bernoulli-type
problems with the $p$-Laplacian, using the method of subsolutions and
supersolutions originating in the pioneering work of Beurling for
the harmonic case \cite{Beurling1957,HenrotShahgholian2000}.

A different approach was developed by Ly and Seck
\cite{LySeck2003}. They formulate a volume-constrained shape
optimization problem associated with the exterior Bernoulli problem,
compute the shape derivative of the corresponding energy, and
derive the overdetermined boundary condition through a Lagrange
multiplier. They then establish a monotonicity property of the
associated multiplier and use it as the key ingredient in a
successive approximation procedure. This strategy yields an
existence result for the exterior Bernoulli problem for every
$1<p<\infty$, without requiring the free boundary to be convex
\cite{LySeck2003}.

The purpose of this paper is to adapt this shape-optimization
strategy to Riemannian manifolds. Let $(M,g)$ be a compact,
connected, smooth Riemannian manifold without boundary, of dimension
$n\geq2$. Let
\[
K\subset M
\]
be a compact connected domain with boundary of class $C^2$. Given a
constant $c>0$, we seek an open domain
\[
K\subset\Omega\subset M
\]
and a function
\[
u_\Omega:\Omega\setminus K\longrightarrow\R
\]
satisfying
\begin{equation}
\begin{cases}
-\plapg u_\Omega=0
&\text{in }\Omega\setminus K,\\
u_\Omega=0
&\text{on }\partial\Omega,\\
u_\Omega=1
&\text{on }\partial K,\\
-\displaystyle\frac{\partial u_\Omega}{\partial\nu_g}=c
&\text{on }\partial\Omega,
\end{cases}
\qquad 1<p<\infty,
\label{eq:main-free-boundary}
\end{equation}
where $\nu_g$ denotes the outward unit normal to $\partial\Omega$ and
\[
\frac{\partial u}{\partial\nu_g}
=
g(\gradg u,\nu_g).
\]

Problem \eqref{eq:main-free-boundary} is the natural Riemannian
analogue of the Euclidean exterior Bernoulli problem considered by
Ly and Seck \cite{LySeck2003}. The extension is not merely a change
of notation. In Euclidean space, dilations provide a canonical
comparison mechanism for nested domains. On a general Riemannian
manifold there is no corresponding global family of dilations, and
the Euclidean scaling argument cannot in general be transferred
directly to the Riemannian $p$-Laplacian.

We introduce a comparison-compatible Riemannian
contraction hypothesis, denoted by {\rm (RC)}, which replaces the
role played by Euclidean dilations in the monotonicity argument.
Under this hypothesis, together with the comparison principle and
Hopf's boundary point lemma, we obtain a conditional monotonicity
result for the Bernoulli constants associated with the optimal
domains.

The paper follows the two main ingredients of the method of
Ly--Seck \cite{LySeck2003}. First, we formulate the appropriate
volume-constrained shape optimization problem on $(M,g)$ and derive
its first shape derivative. The optimality condition is expressed
through a Lagrange multiplier and yields the overdetermined
Bernoulli condition. Second, we use the Riemannian contraction
hypothesis to establish the monotonicity needed for the successive
approximation.

An additional difficulty arises when passing to the limit in the
Riemannian setting. The approximating states are defined on moving
domains, and the normal derivatives on the free boundaries must be
controlled in order to recover the overdetermined condition in the
limit. We therefore state explicitly the compactness, regularity,
and boundary-flux stability assumptions needed in the successive
approximation argument. In particular, the final existence theorem
is a conditional Riemannian extension of the Ly--Seck construction,
rather than a claim that the free boundary problem is solvable on
every compact Riemannian manifold without further geometric
assumptions.

The compactness of the ambient manifold is useful because
admissible domains cannot escape to infinity. Nevertheless, uniform
geometric conditions on their boundaries remain necessary for the
compactness and stability arguments.

The main result of this paper is consequently a sufficient
existence criterion for the Riemannian Bernoulli problem
\eqref{eq:main-free-boundary}. Under the geometric contraction,
compactness, regularity, and boundary-flux stability assumptions
introduced below, the successive approximation converges to a
limiting domain $\Omega^\ast$ whose associated state satisfies the
Riemannian $p$-harmonic equation together with the prescribed
Bernoulli condition
\[
-\frac{\partial u_{\Omega^\ast}}{\partial\nu_g}=c
\qquad\text{on }\partial\Omega^\ast.
\]
The spherical radial construction developed later provides a
concrete geometric model in which the radial transport and boundary
flux can be computed explicitly.

\subsection{The Riemannian $p$-Laplacian}

For a sufficiently regular function $u:M\to\R$, the Riemannian
gradient $\gradg u$ is defined by
\[
g(\gradg u,X)=du(X)
\]
for every vector field $X$ on $M$. Its norm is
\[
\normg{\gradg u}
=
\sqrt{g(\gradg u,\gradg u)}.
\]

For $1<p<\infty$, the Riemannian $p$-Laplacian is
\begin{equation}
\plapg u
=
\divg\!\left(
\normg{\gradg u}^{p-2}\gradg u
\right).
\label{eq:riemannian-p-laplacian}
\end{equation}
For $p=2$, this reduces to the Laplace--Beltrami operator.

The operator \eqref{eq:riemannian-p-laplacian} is intrinsic and does
not depend on the choice of local coordinates.

\subsection{Weak Formulation}

Let $\Omega\subset M$ be a domain containing $K$. A function
\[
u\in W^{1,p}(\Omega\setminus K)
\]
is a weak solution of
\[
-\plapg u=0
\quad\text{in }\Omega\setminus K
\]
if
\begin{equation}
\int_{\Omega\setminus K}
\normg{\gradg u}^{p-2}
\ipg{\gradg u}{\gradg\varphi}
\,dV_g
=
0
\label{eq:weak-state}
\end{equation}
for every
\[
\varphi\in C_c^\infty(\Omega\setminus K).
\]
Dirichlet boundary conditions are understood in the trace sense.

\subsection{The Associated Variational Problem}

For an admissible domain $\Omega$, define
\[
\mathcal{A}(\Omega)
=
\left\{
u\in W^{1,p}(\Omega\setminus K):
\operatorname{Tr}_{\partial K}u=1,\;
\operatorname{Tr}_{\partial\Omega}u=0
\right\}.
\]
We consider the energy
\begin{equation}
J_\Omega(u)
=
\frac1p
\int_{\Omega\setminus K}
\normg{\gradg u}^{p}\,dV_g,
\qquad
u\in\mathcal{A}(\Omega).
\label{eq:state-energy}
\end{equation}
The state problem is
\begin{equation}
\inf_{u\in\mathcal{A}(\Omega)}J_\Omega(u).
\label{eq:state-minimization}
\end{equation}
Whenever the minimum is attained, its unique minimizer is denoted by
$u_\Omega$.

The associated shape functional is
\begin{equation}
J_g(\Omega)
=
J_\Omega(u_\Omega)
=
\frac1p
\int_{\Omega\setminus K}
\normg{\gradg u_\Omega}^{p}\,dV_g.
\label{eq:shape-functional}
\end{equation}

We are therefore led to the volume-constrained shape optimization
problem
\begin{equation}
\inf_{\Omega\in\Og}J_g(\Omega).
\label{eq:shape-optimization}
\end{equation}

\subsection{Admissible Class of Domains}

Let $m_0>0$. We consider open domains $\Omega\subset M$ satisfying
\[
K\subset\Omega,
\qquad
\Vol(\Omega)=m_0.
\]
In addition, we impose a uniform interior geodesic cone condition.

Let $r_0>0$ be smaller than the injectivity radius of $(M,g)$ and
let $\theta\in(0,\pi/2)$. For $x\in M$, $v\in T_xM$ with
$\normg v=1$, and $0<r\leq r_0$, define
\[
C_g(x,v,\theta,r)
=
\left\{
\exp_x(tw):
0<t<r,\;
\normg w=1,\;
g_x(w,v)>\cos\theta
\right\}.
\]
We say that $\Omega$ satisfies a uniform interior geodesic cone
condition with parameters $(r_0,\theta)$ if, for every
$x\in\partial\Omega$, there exists a unit vector $v_x\in T_xM$ such
that
\[
C_g(x,v_x,\theta,r_0)\subset\Omega.
\]

We define
\[
\Og
=
\left\{
\Omega\subset M:
\begin{array}{l}
\Omega\text{ open},\quad K\subset\Omega,\\
\Vol(\Omega)=m_0,\quad
\Omega\text{ satisfies the uniform geodesic cone condition}
\end{array}
\right\}.
\]

Since $M$ is compact, no additional outer bounded domain is required
to prevent admissible domains from escaping to infinity.

\subsection{Riemannian Hausdorff Convergence}

For compact subsets $A,B\subset M$, define
\[
d_g(x,B)=\inf_{y\in B}d_g(x,y)
\]
and
\[
\rho_g(A,B)=\sup_{x\in A}d_g(x,B).
\]
The Hausdorff distance induced by $g$ is
\[
d_H^g(A,B)
=
\max\{\rho_g(A,B),\rho_g(B,A)\}.
\]

A sequence of domains $(\Omega_j)$ is said to converge to $\Omega$ in
the Riemannian Hausdorff sense if
\begin{equation}
d_H^g(M\setminus\Omega_j,M\setminus\Omega)\longrightarrow0.
\label{eq:hausdorff-convergence}
\end{equation}
We write
\[
\Omega_j\xrightarrow{d_H^g}\Omega.
\]

\section{Shape Optimization Results}

Throughout this section, $(M,g)$ and $K$ are as above.

\subsection{Existence and Uniqueness of the State}

Let $\Omega\in\Og$. The functional \eqref{eq:state-energy} is
coercive on the affine space $\mathcal{A}(\Omega)$ after subtracting
a fixed admissible extension of the boundary data. Since
$1<p<\infty$, $W^{1,p}$ is reflexive and the map
\[
\xi\longmapsto\normg{\xi}^{p}
\]
is strictly convex. The direct method therefore gives a unique
minimizer $u_\Omega$.

\begin{proposition}[Well-posedness of the state problem]
\label{prop:state-wellposed}
Let $\Omega\in\Og$ and $1<p<\infty$. Then
\eqref{eq:state-minimization} admits a unique minimizer
$u_\Omega\in\mathcal{A}(\Omega)$. This minimizer is the unique weak
solution of
\[
\begin{cases}
-\plapg u_\Omega=0
&\text{in }\Omega\setminus K,\\
u_\Omega=1
&\text{on }\partial K,\\
u_\Omega=0
&\text{on }\partial\Omega.
\end{cases}
\]
\end{proposition}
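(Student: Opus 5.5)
The plan is the direct method of the calculus of variations on the affine class $\mathcal{A}(\Omega)$, followed by the Euler--Lagrange equation and a monotonicity argument for uniqueness.

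\textbf{Reduction to a linear space.} Fix a function $\phi\in C^\infty(M)$ with $\phi\equiv1$ in a neighbourhood of $K$ and $\operatorname{supp}\phi\subset\Omega$. Such a $\phi$ exists because $K$ is compact and $K\subset\Omega$ with $\Omega$ open, so $d_g(K,M\setminus\Omega)>0$. Then $\phi|_{\Omega\setminus K}\in\mathcal{A}(\Omega)$, so the class is nonempty, and $\mathcal{A}(\Omega)=\phi+W^{1,p}_0(\Omega\setminus K)$.

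Here I read the trace conditions as membership $u-\phi\in W^{1,p}_0$. For a cone-condition domain this is the natural meaning of the trace on $\partial\Omega$. I would state it as the interpretation of ``trace sense'', since $\partial\Omega$ need not be Lipschitz.

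\textbf{Existence.} Take a minimizing sequence $u_k=\phi+w_k$ with $J_\Omega(u_k)\le J_\Omega(\phi)$. Then $\|\gradg w_k\|_{L^p}$ is bounded.

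Poincaré's inequality on $W^{1,p}_0(\Omega\setminus K)$ bounds $\|w_k\|_{W^{1,p}}$. On a compact manifold this holds because $M\setminus\Omega$ contains an open set: by the volume constraint and the cone condition, $\Omega\neq M$ in the relevant regime. One can argue it by contradiction via Rellich compactness on $M$: a limit with zero gradient is constant, vanishes on a set of positive measure, and hence is zero. If instead $\Vol(\Omega)=\Vol(M)$, one uses $\Omega\subsetneq M$ and capacity, which I would note as the degenerate case to exclude.

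By reflexivity, $w_k\rightharpoonup w$ weakly along a subsequence. The closed subspace $W^{1,p}_0$ is weakly closed. The functional $\xi\mapsto\frac1p\int\normg{\xi}^p\,dV_g$ is convex and continuous on $L^p$ vector fields, hence weakly lower semicontinuous. So $u_\Omega=\phi+w$ is a minimizer.

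\textbf{Uniqueness of the minimizer.} Suppose $u_1\ne u_2$ are two minimizers. Their midpoint lies in $\mathcal{A}(\Omega)$, and strict convexity of $\xi\mapsto\normg\xi^p$ pointwise (for $p>1$) gives strict inequality $J_\Omega\big(\tfrac{u_1+u_2}{2}\big)<\min J_\Omega$ unless $\gradg u_1=\gradg u_2$ a.e. In that case $u_1-u_2\in W^{1,p}_0$ has zero gradient, so Poincaré gives $u_1=u_2$.

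\textbf{Euler--Lagrange equation.} For $\varphi\in C_c^\infty(\Omega\setminus K)$, the map $t\mapsto J_\Omega(u_\Omega+t\varphi)$ is differentiable. Dominated convergence applies using the bound $\big|\normg{a+tb}^p-\normg a^p\big|\le p|t|\,(\normg a+\normg b)^{p-1}\normg b$ and Hölder's inequality. Its derivative at $t=0$ is the left side of \eqref{eq:weak-state}, which must vanish at a minimum. So $u_\Omega$ is a weak solution.

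\textbf{Uniqueness of the weak solution.} Let $v\in\mathcal{A}(\Omega)$ be any weak solution. By density, \eqref{eq:weak-state} extends to test functions in $W^{1,p}_0(\Omega\setminus K)$, since the flux $\normg{\gradg u}^{p-2}\gradg u$ lies in $L^{p'}$. Test both equations with $u_\Omega-v$ and subtract:
\[
\int_{\Omega\setminus K}\ipg{\normg{\gradg u_\Omega}^{p-2}\gradg u_\Omega-\normg{\gradg v}^{p-2}\gradg v}{\gradg u_\Omega-\gradg v}\,dV_g=0 .
\]
Strict monotonicity of $\xi\mapsto\normg\xi^{p-2}\xi$ on each tangent space (the gradient of a strictly convex function) gives $\gradg u_\Omega=\gradg v$ a.e., hence $v=u_\Omega$ by Poincaré. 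Alternatively, convexity shows every weak solution is a minimizer, and uniqueness of minimizers concludes.

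\textbf{Main obstacle.} The only non-routine point is the Poincaré inequality together with the precise meaning of the trace on the possibly irregular $\partial\Omega$. I would handle both by working in $\phi+W^{1,p}_0$ and proving Poincaré by the compactness–contradiction argument on the compact manifold $M$.
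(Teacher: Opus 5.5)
Your proposal is correct and follows the paper's route: the direct method on $\phi+W^{1,p}_0(\Omega\setminus K)$, then strict monotonicity of $\xi\mapsto\normg{\xi}^{p-2}\xi$ for uniqueness, with the Poincar\'e, Euler--Lagrange and density steps written out where the paper leaves them implicit. One simplification is that Poincar\'e never needs $M\setminus\Omega$: the zero extension of any $w\in W^{1,p}_0(\Omega\setminus K)$ vanishes on $K$, which has positive volume, so the degenerate case $\Vol(\Omega)=\Vol(M)$ that you set aside needs no separate treatment.
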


\begin{proof}
Existence follows from the direct method of the calculus of
variations. Let $(u_j)$ be a minimizing sequence. After subtracting
a fixed function with the prescribed traces, the sequence is bounded
in $W^{1,p}_0(\Omega\setminus K)$. By reflexivity, a subsequence
converges weakly to some $u_\Omega$. The trace conditions are
preserved, and weak lower semicontinuity gives minimality.

For uniqueness, let $u_1,u_2$ be two minimizers. They satisfy the
weak equation. Subtracting the equations and testing by
$u_1-u_2$ gives
\[
\int_{\Omega\setminus K}
\left\langle
\normg{\gradg u_1}^{p-2}\gradg u_1
-
\normg{\gradg u_2}^{p-2}\gradg u_2,
\gradg u_1-\gradg u_2
\right\rangle_g
\,dV_g=0.
\]
Strict monotonicity of
$\xi\mapsto\normg{\xi}^{p-2}\xi$ implies
$\gradg u_1=\gradg u_2$ almost everywhere. Since the two functions
have the same trace, $u_1=u_2$.
\end{proof}

\subsection{Geometric Compactness of the Admissible Class}

We use the following compactness property of the admissible class.

\begin{proposition}[Hausdorff compactness]
\label{prop:hausdorff-compactness}
Let $(M,g)$ be compact and let $\Og$ satisfy the uniform interior
geodesic cone condition with fixed parameters $(r_0,\theta)$. Assume
\[
K\subset\Omega,
\qquad
\Vol(\Omega)=m_0
\]
for every $\Omega\in\Og$. Then every sequence
$(\Omega_j)\subset\Og$ admits a subsequence and an open set
$\Omega^\ast\subset M$ such that
\[
\Omega_j\xrightarrow{d_H^g}\Omega^\ast.
\]
Under the stability of the volume constraint, $\Omega^\ast\in\Og$.
\end{proposition}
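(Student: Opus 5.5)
The plan follows the Euclidean argument of Chenais type, transported to $(M,g)$ by geodesic charts.

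\emph{Step 1 (subsequence extraction).} Since $M$ is compact, the metric space $(\mathcal{K}(M),d_H^g)$ of nonempty compact subsets of $M$ is compact; this is Blaschke's selection theorem, valid in any compact metric space. The sets $F_j=M\setminus\Omega_j$ are compact and nonempty: they are closed, and nonempty because $\Vol(\Omega_j)=m_0<\Vol(M)$, which we assume so that the class is meaningful. We therefore extract a subsequence with $F_j\to F$ in $d_H^g$ for some compact $F$, and set $\Omega^\ast=M\setminus F$. This set is open, and by definition \eqref{eq:hausdorff-convergence} gives $\Omega_j\xrightarrow{d_H^g}\Omega^\ast$.

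\emph{Step 2 (inclusion of $K$).} Every $x$ in the interior of $K$ satisfies $d_g(x,F_j)\ge d_g(x,M\setminus K)>0$ uniformly in $j$. Passing to the limit, $d_g(x,F)\ge d_g(x,M\setminus K)>0$, so $x\in\Omega^\ast$. For points of $\partial K$, I would use that $K\subset\Omega_j$ with $K$ compact; the $C^2$ regularity of $\partial K$ then gives a uniform buffer once one knows that $\partial\Omega_j$ stays away from $K$. If this is not available, I would state the conclusion as $\mathrm{int}\,K\subset\Omega^\ast$, or choose $K$ inside a slightly larger admissible core.

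\emph{Step 3 (stability of the cone condition).} Let $x\in\partial\Omega^\ast$. Hausdorff convergence of complements gives points $x_j\in\partial\Omega_j$ with $x_j\to x$. Let $v_j=v_{x_j}$ be the corresponding unit vectors. The unit sphere bundle is compact, so after extraction $(x_j,v_j)\to(x,v)$. Since $r_0$ is below the injectivity radius, the exponential map is smooth and uniformly continuous on $\{(y,w):\normg w\le r_0\}$. Hence every point $z=\exp_x(tw)$ of the open cone $C_g(x,v,\theta,r_0)$ lies, for $j$ large, at distance at least some $\delta>0$ from the complement of $C_g(x_j,v_j,\theta,r_0)$. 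This uses the strict inequalities $t<r_0$ and $g(w,v)>\cos\theta$, transported in normal coordinates.

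It follows that $d_g(z,F_j)\ge\delta$ for large $j$, hence $d_g(z,F)\ge\delta$, so $z\in\Omega^\ast$. Thus $\Omega^\ast$ satisfies the cone condition with the same parameters $(r_0,\theta)$.

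\emph{Step 4 (volume).} This is the expected main obstacle, and the statement explicitly assumes it (``under the stability of the volume constraint''). I would still try to prove it. The uniform cone condition gives a uniform bound on the boundaries: each $\partial\Omega_j$ is locally a Lipschitz graph with constants depending only on $(r_0,\theta)$ and the geometry of $M$, via normal coordinates and a finite atlas. This implies $\mathcal{H}^{n}_g(\partial\Omega_j)=0$ together with a uniform estimate $\Vol(\{d_g(\cdot,\partial\Omega_j)<\varepsilon\})\le C\varepsilon$.

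Combined with Hausdorff convergence, this yields $\chi_{\Omega_j}\to\chi_{\Omega^\ast}$ in $L^1(M)$, as in Chenais' Euclidean theorem. Indeed, outside an $\varepsilon$-neighbourhood of $\partial\Omega^\ast$ the characteristic functions eventually agree, since compact subsets of $\Omega^\ast$ are eventually in $\Omega_j$ and points at positive distance outside $\overline{\Omega^\ast}$ are eventually outside $\Omega_j$. Therefore $\Vol(\Omega^\ast)=\lim\Vol(\Omega_j)=m_0$, and $\Omega^\ast\in\Og$.

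If the uniform Lipschitz-graph reduction becomes too technical on $M$, I would invoke the volume stability as the stated hypothesis, which the proposition permits.
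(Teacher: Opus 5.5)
The paper states this proposition without proof: it is followed only by a remark, and Theorem~\ref{thm:optimal-domain} later simply assumes that $\Og$ is Hausdorff-compact. Your Blaschke--Chenais outline is therefore the only argument available. Steps 1 and 3 are sound. In Step 3 the fact you need is continuity of $(y,z)\mapsto\exp_y^{-1}z$ on $\{d_g(y,z)<\mathrm{inj}\}$, combined with the strict inequalities $0<t<r_0$ and $g(w,v)>\cos\theta$. The points $x_j\in\partial\Omega_j$ come from joining a point of $\Omega^\ast$ near $x$, which lies in $\Omega_j$ for large $j$, to a point of $M\setminus\Omega_j$ near $x$. There are, however, two genuine problems.

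\textbf{Step 2.} It cannot be completed, because $K\subset\Omega^\ast$ is false in general. On a large flat torus take $K=\overline{B(0,1)}$ and $\Omega_j=B(a_j,2)$ with $|a_j|=1-1/j$. Each $\Omega_j$ contains $K$, has the same volume, and satisfies the cone condition with $\theta=\pi/4$, $r_0=1$. Yet the limit $B(a,2)$, with $|a|=1$, has $-a\in\partial K\cap\partial\Omega^\ast$. The $C^2$ regularity of $\partial K$ yields no buffer; your clause ``once one knows that $\partial\Omega_j$ stays away from $K$'' is exactly what fails. What is true is $\operatorname{int}K\subset\Omega^\ast$. Membership $\Omega^\ast\in\Og$ requires changing the class, for instance by imposing $d_g(K,M\setminus\Omega)\ge\delta_0$. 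That condition passes to the limit, since $d_g(k,F)\ge d_g(k,F_j)-d_H^g(F_j,F)$.

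\textbf{Step 4.} It rests on a false claim. The paper's cone condition places a cone only at the vertex $x$ itself, not at all nearby boundary points with a common axis as in Chenais' uniform cone property. Hence it does not make $\partial\Omega_j$ a Lipschitz graph: a geodesic ball minus a short geodesic slit satisfies it. You do not need graphs.
\begin{itemize}
\item[(a)] $\Vol(\partial\Omega^\ast)=0$ follows from Step 3 and Lebesgue density. At each $x\in\partial\Omega^\ast$ the cone $C_g(x,v,\theta,r)\subset\Omega^\ast$ fills a fraction at least $c(n,\theta)>0$ of $B_g(x,r)$ for small $r$, so no point of $\partial\Omega^\ast$ is a density point.
\item[(b)] You assert without proof that a point $z$ with $B_g(z,2\delta)\subset F$ is eventually outside $\Omega_j$. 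This is precisely where the cone condition is needed: finite $1/j$-nets of a ball converge to the ball in $d_H^g$, yet most interior points avoid them. The proof: if $z\in\Omega_j$ infinitely often, a geodesic from $z$ to a point of $F_j$ within $\rho_g(F,F_j)$ gives $x_j\in\partial\Omega_j$ with $d_g(x_j,z)\to0$. The cone at $x_j$ then contains a ball of radius $\eta(\delta,\theta)>0$ centred at $\exp_{x_j}(\tfrac{\delta}{2}v_j)\in B_g(z,2\delta)\subset F$. This contradicts $\rho_g(F,F_j)\to0$.
\end{itemize}
With (a) and (b), dominated convergence gives $\chi_{\Omega_j}\to\chi_{\Omega^\ast}$ in $L^1(M)$. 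So the volume constraint is actually stable and the statement's caveat can be removed; it is the inclusion of $K$ that genuinely breaks.
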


\begin{remark}
The compactness of the ambient manifold prevents domains from
escaping to infinity. The uniform geodesic cone condition prevents
degeneration of the boundary geometry.
\end{remark}

\subsection{Uniform Sobolev Extension Property}

The state functions associated with a varying sequence of domains are
naturally defined on the varying perforated domains
$\Omega\setminus K$. To regard these functions as a bounded
sequence in a fixed Sobolev space, we impose the following uniform
extension hypothesis.

\begin{assumption}[Uniform Sobolev extension property]
\label{hyp:extension}
There exists a constant $C_{\rm ext}>0$, independent of
$\Omega\in\mathcal O_g$, such that, for every
$u\in W^{1,p}(\Omega\setminus K)$, there exists an extension
\[
E_\Omega u\in W^{1,p}(M)
\]
satisfying
\[
E_\Omega u=u\quad\text{a.e. in }\Omega\setminus K,
\]
and
\[
\left\|E_\Omega u\right\|_{W^{1,p}(M)}
\leq C_{\rm ext}
\left\|u\right\|_{W^{1,p}(\Omega\setminus K)}.
\]
\end{assumption}

\begin{remark}
Hypothesis~\ref{hyp:extension} is a uniform Sobolev extension
assumption on the admissible family; it is not claimed to follow from
the uniform interior geodesic cone condition alone. In the Riemannian
setting, Post--Ramos Oliv\'e--Rose~\cite{PostRamosOliveRose2023},
Theorem~1.2, construct extension operators with uniform bounds for
geometrically regular families of domains. Their result is formulated
for $H^1=W^{1,2}$ and under stronger geometric assumptions, including
uniform rolling-ball and second fundamental form bounds. Since the
present paper treats the general range $1<p<\infty$, we formulate the
corresponding $W^{1,p}$ extension property explicitly as part of the
admissible-class assumptions. 
\end{remark}

\subsection{Uniform Bounds for the State Functions}

Let
\[
\Omega_j\xrightarrow{d_H^g}\Omega^\ast,
\qquad
u_j:=u_{\Omega_j}.
\]
The minimizing property gives a uniform estimate
\begin{equation}
\int_{\Omega_j\setminus K}
\normg{\gradg u_j}^{p}\,dV_g
\leq C,
\label{eq:uniform-energy-bound}
\end{equation}
where $C$ is independent of $j$.

After applying the uniform extension operators, we obtain a bounded
sequence in $W^{1,p}(M)$. Hence, after extraction,
\[
E_{\Omega_j}u_j\rightharpoonup u^\ast
\quad\text{weakly in }W^{1,p}(M),
\]
and, by compactness of the Sobolev embedding on the compact manifold,
\[
E_{\Omega_j}u_j\to u^\ast
\quad\text{strongly in }L^p(M)
\]
and almost everywhere.

\subsection{Identification of the Limit State}

Let
\[
\varphi\in C_c^\infty(\Omega^\ast\setminus K).
\]
Hausdorff convergence implies that, for sufficiently large $j$,
\[
\operatorname{supp}\varphi\subset\Omega_j\setminus K.
\]
Thus $\varphi$ can be used as a test function in the weak equation
for $u_j$:
\[
\int_{\Omega_j\setminus K}
\normg{\gradg u_j}^{p-2}
\ipg{\gradg u_j}{\gradg\varphi}
\,dV_g=0.
\]

Set
\[
A_x(\xi)=\normg{\xi}^{p-2}\xi.
\]
The map $A_x$ is strictly monotone:
\[
\ipg{A_x(\xi)-A_x(\eta)}{\xi-\eta}
\geq0,
\]
with equality if and only if $\xi=\eta$.

A Minty argument then identifies the weak limit of
$A_x(\gradg u_j)$ with $A_x(\gradg u^\ast)$, and consequently
\[
-\plapg u^\ast=0
\quad\text{in }\Omega^\ast\setminus K.
\]
The stability of the trace conditions gives
\[
u^\ast=1\quad\text{on }\partial K,
\qquad
u^\ast=0\quad\text{on }\partial\Omega^\ast.
\]
By Proposition~\ref{prop:state-wellposed},
\[
u^\ast=u_{\Omega^\ast}.
\]

\subsection{Lower Semicontinuity of the Shape Functional}

The weak lower semicontinuity of the $p$-energy yields
\[
\frac1p
\int_{\Omega^\ast\setminus K}
\normg{\gradg u_{\Omega^\ast}}^p\,dV_g
\leq
\liminf_{j\to\infty}
\frac1p
\int_{\Omega_j\setminus K}
\normg{\gradg u_j}^p\,dV_g.
\]
Therefore,
\begin{equation}
J_g(\Omega^\ast)
\leq
\liminf_{j\to\infty}J_g(\Omega_j).
\label{eq:shape-lsc}
\end{equation}

\subsection{Existence of an Optimal Domain}

\begin{theorem}[Existence of an optimal domain]
\label{thm:optimal-domain}
Let $(M,g)$ be compact, connected and without boundary, and let
$K\subset M$ be a compact connected $C^2$ domain. Assume
$1<p<\infty$ and that $\Og$ is compact with respect to
Riemannian Hausdorff convergence. Then there exists
\[
\Omega^\ast\in\Og
\]
such that
\[
J_g(\Omega^\ast)
=
\min_{\Omega\in\Og}J_g(\Omega).
\]
\end{theorem}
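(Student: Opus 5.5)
The argument is the direct method applied at the level of shapes, assembling the results already established in this section. First I will note that the infimum
\[
\alpha:=\inf_{\Omega\in\Og}J_g(\Omega)
\]
is finite and nonnegative. It is nonnegative because $J_g\geq0$. It is finite provided $\Og\neq\emptyset$; I take nonemptiness as implicit in the hypotheses, for instance $m_0>\Vol(K)$ together with the existence of one admissible domain. For every $\Omega\in\Og$, Proposition~\ref{prop:state-wellposed} gives a well-defined state $u_\Omega$, so $J_g(\Omega)$ is defined. I will pick a minimizing sequence $(\Omega_j)\subset\Og$ with $J_g(\Omega_j)\to\alpha$. Along this sequence the energies are bounded, which is exactly the uniform bound \eqref{eq:uniform-energy-bound}.

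Second, I will invoke the assumed compactness of $\Og$ for Riemannian Hausdorff convergence, which is also the content of Proposition~\ref{prop:hausdorff-compactness} together with stability of the volume constraint. This lets me extract a subsequence, still denoted $(\Omega_j)$, and some $\Omega^\ast\in\Og$ with $\Omega_j\xrightarrow{d_H^g}\Omega^\ast$. The point of taking compactness as a hypothesis is that it guarantees the limit stays in the class: the constraints $K\subset\Omega^\ast$, $\Vol(\Omega^\ast)=m_0$ and the uniform cone condition all pass to the limit.

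Third, I will apply the subsection arguments to this sequence. Using the uniform extension operators of Hypothesis~\ref{hyp:extension}, the sequence $E_{\Omega_j}u_{\Omega_j}$ is bounded in $W^{1,p}(M)$, so it converges weakly in $W^{1,p}(M)$ and strongly in $L^p(M)$ to some $u^\ast$. The Minty argument and trace stability then identify $u^\ast=u_{\Omega^\ast}$. The lower semicontinuity \eqref{eq:shape-lsc} finally gives
\[
J_g(\Omega^\ast)\leq\liminf_{j\to\infty}J_g(\Omega_j)=\alpha.
\]
Since $\Omega^\ast\in\Og$, we also have $J_g(\Omega^\ast)\geq\alpha$, so the infimum is attained and is a minimum.

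The main obstacle is the identification step, and specifically the trace condition $u^\ast=0$ on $\partial\Omega^\ast$, together with the lower semicontinuity on moving domains. For the boundary condition, I would show that $u^\ast=0$ a.e. on $M\setminus\overline{\Omega^\ast}$. This should follow because each compact subset of the interior of $M\setminus\Omega^\ast$ lies in $M\setminus\Omega_j$ for large $j$ by Hausdorff convergence of the complements. To make this work, $E_{\Omega_j}u_j$ should be taken as the zero extension outside $\Omega_j$ and equal to $1$ on $K$; this extension is legitimate because the trace of $u_j$ vanishes on $\partial\Omega_j$. The uniform cone condition then rules out thin pieces of complement, so vanishing outside $\Omega^\ast$ yields $u^\ast\in W^{1,p}_0$-type boundary behaviour on $\partial\Omega^\ast$; this is the Šverák/Bucur–Zolésio type stability, which I would cite.

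With zero extension, lower semicontinuity follows from weak lower semicontinuity of $\int_M\normg{\gradg\cdot}^p\,dV_g$ on $W^{1,p}(M)$. The integrals over $\Omega_j\setminus K$ coincide with the integrals over all of $M$ of the extended functions, because the extensions are constant outside $\Omega_j\setminus K$. Finally, minimality of $u_{\Omega^\ast}$ in $\mathcal{A}(\Omega^\ast)$ gives $J_g(\Omega^\ast)\leq J_{\Omega^\ast}(u^\ast)$, which holds even without the full Minty identification.
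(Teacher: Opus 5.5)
Your proposal is correct and follows the same route as the paper's proof: a minimizing sequence, a Hausdorff-convergent subsequence whose limit lies in $\Og$ by the assumed compactness, uniform energy bounds and extension to produce $u^\ast$, identification of its boundary values, and the lower semicontinuity \eqref{eq:shape-lsc}. Your remark that only $u^\ast|_{\Omega^\ast\setminus K}\in\mathcal{A}(\Omega^\ast)$ plus weak lower semicontinuity is needed, so that the Minty identification can be bypassed via $J_g(\Omega^\ast)\le J_{\Omega^\ast}(u^\ast)$, is a sound streamlining of the paper's sketch. One small correction: the claim that compact subsets of the interior of $M\setminus\Omega^\ast$ eventually avoid $\Omega_j$ does not follow from Hausdorff convergence of complements alone (take e.g.\ $\Omega_j=\Omega^\ast\cup B_g(x,1/j)$ with $x$ in that interior), but it does follow from the uniform geodesic cone condition, which you already invoke at that point.
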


\begin{proof}
Let $(\Omega_j)\subset\Og$ be a minimizing sequence. By
Proposition~\ref{prop:hausdorff-compactness}, after extraction,
\[
\Omega_j\xrightarrow{d_H^g}\Omega^\ast
\]
for some $\Omega^\ast\in\Og$. The uniform estimates and the
identification of the limit state give
\[
u_j\rightharpoonup u_{\Omega^\ast}.
\]
By \eqref{eq:shape-lsc},
\[
J_g(\Omega^\ast)
\leq
\liminf_{j\to\infty}J_g(\Omega_j).
\]
Since $(\Omega_j)$ is minimizing, the right-hand side equals
$\inf_{\Omega\in\Og}J_g(\Omega)$. The reverse inequality follows
from the definition of the infimum.
\end{proof}

\subsection{Shape Derivative of the Energy Functional}

Let $V\in C_c^1(M,TM)$ be a vector field vanishing in a neighbourhood
of $K$, and let $(\Phi_t)$ be its local flow:
\[
\frac{\dd}{\dd t}\Phi_t(x)
=
V(\Phi_t(x)),
\qquad
\Phi_0(x)=x.
\]
Define
\[
\Omega_t=\Phi_t(\Omega).
\]
Since $V$ vanishes near $K$, the obstacle remains fixed.

Let $u_t=u_{\Omega_t}$. The material derivative is
\[
\dot u
=
\left.
\frac{\dd}{\dd t}
(u_t\circ\Phi_t)
\right|_{t=0},
\]
and the shape derivative is
\[
u'
=
\dot u-g(\gradg u_\Omega,V).
\]
Differentiating the boundary condition
$u_t=0$ on $\partial\Omega_t$ gives
\[
u'=-g(\gradg u_\Omega,V)
\quad\text{on }\partial\Omega.
\]
Since $u_\Omega=0$ on $\partial\Omega$,
\[
\gradg u_\Omega
=
\frac{\partial u_\Omega}{\partial\nu_g}\nu_g,
\]
and therefore
\begin{equation}
u'
=
-\frac{\partial u_\Omega}{\partial\nu_g}
g(V,\nu_g)
\quad\text{on }\partial\Omega.
\label{eq:shape-derivative-boundary}
\end{equation}

\begin{proposition}[Riemannian shape derivative]
\label{prop:shape-derivative}
Let $\Omega\in\Og$ have sufficiently regular boundary and let
$V\in C_c^1(M,TM)$ vanish in a neighbourhood of $K$. Then
\begin{equation}
J_g'(\Omega)(V)
=
-\frac{p-1}{p}
\int_{\partial\Omega}
\normg{\gradg u_\Omega}^{p}
g(V,\nu_g)\,dS_g.
\label{eq:hadamard}
\end{equation}
Equivalently,
\begin{equation}
J_g'(\Omega)(V)
=
-\frac{p-1}{p}
\int_{\partial\Omega}
\left|
\frac{\partial u_\Omega}{\partial\nu_g}
\right|^p
g(V,\nu_g)\,dS_g.
\label{eq:hadamard-normal}
\end{equation}
\end{proposition}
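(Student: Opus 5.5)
We will compute the derivative of the energy along the flow, using Hadamard's formula for domain integrals and the weak equation satisfied by $u_\Omega$ to get rid of the term involving $u'$. Write
\[
J_g(\Omega_t)=\frac1p\int_{\Omega_t\setminus K}\normg{\gradg u_t}^p\,dV_g .
\]
Since $V$ vanishes near $K$, only $\partial\Omega$ moves. We assume enough regularity of $u_\Omega$ up to $\partial\Omega$, say $C^{1,\alpha}$ near $\partial\Omega$ with $\gradg u_\Omega\neq0$ there, so that $u'$ exists and traces make sense. The Riemannian Reynolds transport theorem, $\frac{\dd}{\dd t}\int_{\Omega_t}f_t\,dV_g=\int_\Omega f'\,dV_g+\int_{\partial\Omega}f\,g(V,\nu_g)\,dS_g$, then gives
\[
J_g'(\Omega)(V)=\int_{\Omega\setminus K}\normg{\gradg u_\Omega}^{p-2}\ipg{\gradg u_\Omega}{\gradg u'}\,dV_g+\frac1p\int_{\partial\Omega}\normg{\gradg u_\Omega}^p g(V,\nu_g)\,dS_g .
\]
To justify this, pull everything back by $\Phi_t$. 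The volume form transforms via $\det D\Phi_t$, and its derivative at $t=0$ is $\divg V$. The metric pulled back, $\Phi_t^\ast g$, contributes derivative $\mathcal L_V g$. The difference between material and shape derivative is then absorbed by the Riemannian divergence theorem. This is the same computation as in the Euclidean case, written in terms of $\mathcal L_V g$.

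Next we evaluate the interior term. Green's formula with the vector field $\normg{\gradg u_\Omega}^{p-2}\gradg u_\Omega$, which is divergence-free since $-\plapg u_\Omega=0$, gives
\[
\int_{\Omega\setminus K}\normg{\gradg u_\Omega}^{p-2}\ipg{\gradg u_\Omega}{\gradg u'}\,dV_g=\int_{\partial\Omega\cup\partial K}u'\,\normg{\gradg u_\Omega}^{p-2}\frac{\partial u_\Omega}{\partial\nu_g}\,dS_g .
\]
On $\partial K$ we have $u'=0$: $V=0$ there and $u_t=1$ on $\partial K$ for all $t$. On $\partial\Omega$ we use \eqref{eq:shape-derivative-boundary}, $u'=-\frac{\partial u_\Omega}{\partial\nu_g}g(V,\nu_g)$. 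We also have $\normg{\gradg u_\Omega}=\left|\frac{\partial u_\Omega}{\partial\nu_g}\right|$, because $\gradg u_\Omega$ is normal on the level set $\partial\Omega$. Together these make the interior term equal to
\[
-\int_{\partial\Omega}\normg{\gradg u_\Omega}^p g(V,\nu_g)\,dS_g .
\]
Adding the boundary term $\frac1p\int_{\partial\Omega}\cdots$ yields the coefficient $-1+\frac1p=-\frac{p-1}{p}$, which is \eqref{eq:hadamard}. Formula \eqref{eq:hadamard-normal} follows from the same identity $\normg{\gradg u_\Omega}=|\partial_{\nu_g}u_\Omega|$.

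The main obstacle is justifying the differentiability of $t\mapsto u_t\circ\Phi_t$ in $W^{1,p}$ and the regularity of $u'$ needed for Green's formula. The $p$-Laplacian is degenerate ($p>2$) or singular ($p<2$) where $\gradg u_\Omega=0$. We would handle this in two steps.

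First, transport the state problem to the fixed domain $\Omega\setminus K$. There it reads as the minimization of
\[
\frac1p\int\bigl|(D\Phi_t)^{-\ast}\nabla v\bigr|_{\Phi_t^\ast g}^p\,J_t\,dV_g ,
\]
where $J_t$ is the Jacobian of $\Phi_t$ with respect to $dV_g$. This is strictly monotone with coefficients that are $C^1$ in $t$. An implicit-function argument then gives differentiability of $t\mapsto v_t=u_t\circ\Phi_t$, provided the linearized operator is invertible. This holds near $\partial\Omega\cup\partial K$ if $\gradg u_\Omega\neq0$ there, which follows from Hopf's lemma.

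Second, to avoid the interior degeneracy altogether, differentiate the energy directly. Since $v_t$ minimizes the transported functional, Danskin's argument shows that the derivative equals the partial $t$-derivative of the transported functional at $v=u_\Omega$. Expressed with $\divg V$ and $\mathcal L_V g$, this gives the distributed form
\[
\frac1p\int\normg{\gradg u}^p\divg V-\int\normg{\gradg u}^{p-2}\,\tfrac12(\mathcal L_V g)(\gradg u,\gradg u).
\]
An integration by parts using the equation, valid given the assumed boundary regularity, reduces this to the boundary formula.
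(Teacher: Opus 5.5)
Your proposal is correct and is essentially the paper's argument. The Riemannian transport formula gives the state-derivative term plus $\frac1p\int_{\partial\Omega}\normg{\gradg u_\Omega}^p g(V,\nu_g)\,dS_g$. The equation $-\plapg u_\Omega=0$, Green's formula and $u'=-\partial_{\nu_g}u_\Omega\,g(V,\nu_g)$ on $\partial\Omega$ then turn the state-derivative term into $-\int_{\partial\Omega}\normg{\gradg u_\Omega}^p g(V,\nu_g)\,dS_g$. This is exactly the paper's boundary integrand $\frac1p\normg{\gradg u_\Omega}^p-\normg{\gradg u_\Omega}^p$.

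Your Danskin/distributed-form justification goes beyond the paper's sketch and is the right way to make it rigorous. It also makes your implicit-function step unnecessary, which is just as well: that step would not work globally if $\gradg u_\Omega$ vanishes inside $\Omega\setminus K$. For the last integration by parts, split $V$ into two pieces. The first is supported in a collar of $\partial\Omega$, where $\gradg u_\Omega\neq0$ and $u_\Omega$ is $C^2$, so the classical computation applies. The second vanishes near $\partial\Omega\cup K$; for it $\Omega_t=\Omega$, so the distributed form is zero.
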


\begin{proof}
Apply the Riemannian transport formula to
\[
J_g(\Omega)
=
\frac1p
\int_{\Omega\setminus K}
\normg{\gradg u_\Omega}^{p}\,dV_g.
\]
The derivative with respect to the state vanishes because
$u_\Omega$ satisfies the Euler--Lagrange equation
$-\plapg u_\Omega=0$. Since $u_\Omega=0$ on the moving boundary,
the boundary integrand reduces to
\[
\frac1p\normg{\gradg u_\Omega}^{p}
-
\normg{\gradg u_\Omega}^{p}
=
-\frac{p-1}{p}
\normg{\gradg u_\Omega}^{p}.
\]
This gives \eqref{eq:hadamard}.
\end{proof}

\subsection{Optimality Condition Under a Volume Constraint}

We impose
\[
\Vol(\Omega)=m_0.
\]
For $\Omega_t=\Phi_t(\Omega)$,
\begin{equation}
\left.
\frac{\dd}{\dd t}\Vol(\Omega_t)
\right|_{t=0}
=
\int_{\partial\Omega}
g(V,\nu_g)\,dS_g.
\label{eq:volume-variation}
\end{equation}

Let $\Omega$ be a minimizer of $J_g$ under the volume constraint.
By the Lagrange multiplier principle, there exists
$\lambda_\Omega\in\R$ such that
\[
J_g'(\Omega)(V)
-
\lambda_\Omega
\left.
\frac{\dd}{\dd t}\Vol(\Omega_t)
\right|_{t=0}
=0
\]
for every admissible deformation field $V$.

Using \eqref{eq:hadamard} and \eqref{eq:volume-variation},
\[
\int_{\partial\Omega}
\left[
-\frac{p-1}{p}
\normg{\gradg u_\Omega}^{p}
-\lambda_\Omega
\right]
g(V,\nu_g)\,dS_g
=0.
\]
Since the normal component of $V$ can be prescribed arbitrarily,
\[
-\frac{p-1}{p}
\normg{\gradg u_\Omega}^{p}
-\lambda_\Omega
=0
\quad\text{on }\partial\Omega.
\]
Hence
\begin{equation}
\normg{\gradg u_\Omega}^{p}
=
-\frac{p}{p-1}\lambda_\Omega
\quad\text{on }\partial\Omega.
\label{eq:lambda-condition}
\end{equation}
In particular,
\[
\lambda_\Omega<0.
\]
Define
\begin{equation}
c_\Omega
=
\left(
-\frac{p}{p-1}\lambda_\Omega
\right)^{1/p}.
\label{eq:c-definition}
\end{equation}
Then
\[
\normg{\gradg u_\Omega}
=
c_\Omega
\quad\text{on }\partial\Omega.
\]
Since $u_\Omega>0$ in $\Omega\setminus K$ and
$u_\Omega=0$ on $\partial\Omega$,
\[
\frac{\partial u_\Omega}{\partial\nu_g}<0
\quad\text{on }\partial\Omega.
\]
Therefore
\begin{equation}
-\frac{\partial u_\Omega}{\partial\nu_g}
=
c_\Omega
=
\left(
-\frac{p}{p-1}\lambda_\Omega
\right)^{1/p}
\quad\text{on }\partial\Omega.
\label{eq:bernoulli-optimality}
\end{equation}

Thus every sufficiently regular optimal domain satisfies the
overdetermined Riemannian Bernoulli problem
\begin{equation}
\begin{cases}
-\plapg u_\Omega=0
&\text{in }\Omega\setminus K,\\
u_\Omega=0
&\text{on }\partial\Omega,\\
u_\Omega=1
&\text{on }\partial K,\\
-\displaystyle\frac{\partial u_\Omega}{\partial\nu_g}
=c_\Omega
&\text{on }\partial\Omega.
\end{cases}
\label{eq:optimal-overdetermined}
\end{equation}

\subsection{Riemannian Contraction }

The monotonicity argument in the Euclidean setting relies on
dilations. Such dilations are not available on a general Riemannian
manifold. We introduce the following additional
comparison-compatible hypothesis.

\medskip
\noindent
\textbf{(RC) Riemannian contraction hypothesis.}

Let
\[
\Omega_1,\Omega_2\in\Og,
\qquad
K\subset\Omega_1\subsetneq\Omega_2.
\]
We assume that there exist an admissible domain
$\Omega_\Psi\subset\Omega_1$, a point
\[
x_0\in\partial\Omega_\Psi\cap\partial\Omega_1,
\]
and a $C^2$ diffeomorphism
\[
\Psi:
\overline{\Omega_2\setminus K}
\longrightarrow
\overline{\Omega_\Psi\setminus K}
\]
such that
\[
\Psi(\partial K)=\partial K,
\]
and
\[
\nu_{\Omega_\Psi}(x_0)
=
\nu_{\Omega_1}(x_0).
\]

Let $u_{\Omega_2}$ be the state solution in $\Omega_2$ and define
\[
v=u_{\Omega_2}\circ\Psi^{-1}
\quad\text{in }\Omega_\Psi\setminus K.
\]
We assume that $v$ is a weak subsolution, namely
\begin{equation}
\int_{\Omega_\Psi\setminus K}
\normg{\gradg v}^{p-2}
\ipg{\gradg v}{\gradg\varphi}
\,dV_g
\leq0
\label{eq:RC-subsolution}
\end{equation}
for every nonnegative
\[
\varphi\in C_c^\infty(\Omega_\Psi\setminus K).
\]
Moreover,
\[
v=1\quad\text{on }\partial K,
\qquad
v=0\quad\text{on }\partial\Omega_\Psi.
\]
Writing
\[
y_0=\Psi^{-1}(x_0),
\]
we assume that there exists
\[
\kappa_\Psi>1
\]
such that
\begin{equation}
-\frac{\partial v}{\partial\nu_g}(x_0)
\geq
\kappa_\Psi
\left(
-\frac{\partial u_{\Omega_2}}{\partial\nu_g}(y_0)
\right).
\label{eq:RC-flux}
\end{equation}

The constant $\kappa_\Psi$ may depend on $\Psi$, but is independent
of the particular solution $u_{\Omega_2}$.

In the Euclidean setting, the homothety $\Psi_t(x)=tx$ gives
$\kappa_\Psi=1/t>1$. The hypothesis (RC) is an additional geometric
and analytic assumption and is not automatic on an arbitrary
Riemannian manifold.

\subsection{Monotonicity of the Lagrange Multiplier}

\begin{theorem}[Monotonicity of the Lagrange multiplier]
\label{thm:monotonicity-lambda}
Let $(M,g)$ be a Riemannian manifold and let $K\subset M$ be a fixed
compact obstacle. Let
\[
\Omega_1,\Omega_2\in\Og
\]
be admissible $C^2$ domains satisfying
\[
K\subset\Omega_1\subsetneq\Omega_2.
\]
Assume that \textnormal{(RC)} holds. Let $u_{\Omega_i}$ be the
corresponding state solutions and let $\lambda_{\Omega_i}$ be the
associated Lagrange multipliers. Then
\[
\lambda_{\Omega_1}<\lambda_{\Omega_2}.
\]
Equivalently,
\[
c_{\Omega_1}>c_{\Omega_2}.
\]
\end{theorem}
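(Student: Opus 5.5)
The plan follows the Ly--Seck scheme. The comparison principle plus Hopf's lemma at the touching point $x_0$ turns the contraction hypothesis (RC) into a strict inequality between the boundary fluxes. Since $\lambda_\Omega$ and $c_\Omega$ are linked by \eqref{eq:c-definition}, and $c\mapsto -\frac{p-1}{p}c^p$ is strictly decreasing on $(0,\infty)$, it suffices to prove $c_{\Omega_1}>c_{\Omega_2}$. Throughout, we use that for each $i$ the optimality relation \eqref{eq:bernoulli-optimality} gives $-\partial_{\nu_g}u_{\Omega_i}\equiv c_{\Omega_i}$ on $\partial\Omega_i$, a constant. In particular $-\partial_{\nu_g}u_{\Omega_2}(y_0)=c_{\Omega_2}$ wherever $y_0=\Psi^{-1}(x_0)$ lands on $\partial\Omega_2$. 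Since $\Psi$ maps $\overline{\Omega_2\setminus K}$ onto $\overline{\Omega_\Psi\setminus K}$ with $\Psi(\partial K)=\partial K$, it carries $\partial\Omega_2$ onto $\partial\Omega_\Psi$, so $y_0\in\partial\Omega_2$.

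\emph{Step 1 (comparison).} Let $v=u_{\Omega_2}\circ\Psi^{-1}$ on $\Omega_\Psi\setminus K$. By (RC), $v$ is a weak subsolution, $v=1$ on $\partial K$ and $v=0$ on $\partial\Omega_\Psi$. Since $\Omega_\Psi\subset\Omega_1$, we have $u_{\Omega_1}\ge0=v$ on $\partial\Omega_\Psi$, and $u_{\Omega_1}=v=1$ on $\partial K$. The weak comparison principle for $-\plapg$ then gives $v\le u_{\Omega_1}$ in $\Omega_\Psi\setminus K$. To prove it, test the difference of the weak inequalities with $(v-u_{\Omega_1})^+\in W^{1,p}_0$ and use strict monotonicity of $A_x$, exactly as in the uniqueness part of Proposition~\ref{prop:state-wellposed}.

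\emph{Step 2 (boundary comparison).} At $x_0\in\partial\Omega_\Psi\cap\partial\Omega_1$ both functions vanish, and the outward normals coincide by (RC). Since $v\le u_{\Omega_1}$ inside and both are $0$ at $x_0$, differentiating along the common inner normal gives
\[
-\frac{\partial u_{\Omega_1}}{\partial\nu_g}(x_0)\ \ge\ -\frac{\partial v}{\partial\nu_g}(x_0).
\]
This uses $C^1$ regularity up to the boundary of both functions near $x_0$: for $u_{\Omega_1}$ this follows from $C^{1,\alpha}$ boundary regularity of $p$-harmonic functions on $C^2$ domains, and for $v$ from $\Psi\in C^2$. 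Combining with \eqref{eq:RC-flux} and the constancy of the fluxes,
\[
c_{\Omega_1}=-\frac{\partial u_{\Omega_1}}{\partial\nu_g}(x_0)\ \ge\ -\frac{\partial v}{\partial\nu_g}(x_0)\ \ge\ \kappa_\Psi\, c_{\Omega_2}.
\]

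\emph{Step 3 (strictness).} By Hopf's lemma, applied to $u_{\Omega_2}$ near $\partial\Omega_2$ (where $u_{\Omega_2}>0$ and the gradient does not vanish, so the equation is locally uniformly elliptic), $c_{\Omega_2}>0$. With $\kappa_\Psi>1$ this yields $c_{\Omega_1}\ge\kappa_\Psi c_{\Omega_2}>c_{\Omega_2}$, so strictness comes directly from $\kappa_\Psi>1$ and no strong comparison principle is needed. Finally, translating back through \eqref{eq:c-definition} gives $\lambda_{\Omega_1}=-\frac{p-1}{p}c_{\Omega_1}^p<-\frac{p-1}{p}c_{\Omega_2}^p=\lambda_{\Omega_2}$.

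The main obstacle is Step 2: justifying the pointwise normal-derivative comparison for the degenerate $p$-Laplacian. This needs boundary $C^{1,\alpha}$ regularity (Lieberman-type estimates in coordinate charts) and nonvanishing of the gradient near $\partial\Omega_i$. I would first obtain the latter from Hopf's lemma for $p$-harmonic functions (Vázquez's strong maximum principle) using the interior ball at $x_0$ given by $C^2$ regularity. A second point to check is that the theorem implicitly requires both $\Omega_i$ to be optimal domains (so that $\lambda_{\Omega_i}$ exists and the flux is constant); I would state this explicitly.
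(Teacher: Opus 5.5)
Your proof is correct and has the same skeleton as the paper's. Both arguments use four ingredients: the weak comparison $v\le u_{\Omega_1}$ obtained by testing with $(v-u_{\Omega_1})^+$, a comparison of normal derivatives at the contact point $x_0$, the flux inequality of (RC) at $y_0\in\partial\Omega_2$, and the strict decrease of $c\mapsto-\frac{p-1}{p}c^p$.

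The difference is where strictness comes from.

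\begin{itemize}
\item The paper applies the strong comparison principle and Hopf's lemma to the pair $(u_{\Omega_1},v)$ to get $c_{\Omega_1}>-\partial_{\nu_g}v(x_0)$. This forces it to assume $v\not\equiv u_{\Omega_1}$.
\item You use only the non-strict inequality. It follows from $v\le u_{\Omega_1}$, $v(x_0)=u_{\Omega_1}(x_0)=0$, and $C^{1,\alpha}$ regularity up to the boundary. You then get strictness from $\kappa_\Psi>1$ together with $c_{\Omega_2}>0$.
\end{itemize}

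Your route buys two things. First, you never need a strong comparison principle for the degenerate operator $-\plapg$. Second, you cover the case $v\equiv u_{\Omega_1}$. This is exactly what happens in the paper's spherical model. There, with $\Omega_1=\Omega_{L'}$, $\Omega_2=\Omega_L$ and $\Psi=\Psi_{L,L'}$, one has $U_{L,L'}=u_{L'}$, so $v\equiv u_{\Omega_1}$. Moreover $c_{\Omega_1}=q_{\mathcal R}(L')=\kappa_{L,L'}c_{\Omega_2}$ holds with equality. So the paper's intermediate strict inequality $c_{\Omega_1}>\kappa_\Psi c_{\Omega_2}$ fails in that model, while your chain remains valid.

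One thing to tidy is the justification of $c_{\Omega_2}>0$. Your parenthetical presupposes the non-vanishing gradient it is meant to establish. Instead, cite directly the boundary-point lemma for positive $p$-superharmonic functions, proved via radial barriers (V\'azquez), as you indicate in your last paragraph. The paper simply takes this positivity for granted when it asserts $\lambda_\Omega<0$.
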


\begin{proof}
Set
\[
u_1=u_{\Omega_1},
\qquad
u_2=u_{\Omega_2}.
\]
By (RC), there exist $\Omega_\Psi$, $\Psi$ and $x_0$ as above.
Set
\[
v=u_2\circ\Psi^{-1}.
\]
By (RC), $v$ is a weak subsolution in
$\Omega_\Psi\setminus K$ and
\[
v=1\quad\text{on }\partial K,
\qquad
v=0\quad\text{on }\partial\Omega_\Psi.
\]

On the other hand,
\[
-\plapg u_1=0
\quad\text{in }\Omega_1\setminus K,
\]
with
\[
u_1=1\quad\text{on }\partial K,
\qquad
u_1=0\quad\text{on }\partial\Omega_1.
\]
Since $\Omega_\Psi\subset\Omega_1$, we may restrict $u_1$ to
$\Omega_\Psi\setminus K$.

Set
\[
w=(v-u_1)^+.
\]
The boundary values imply
\[
w=0
\quad\text{on }\partial(\Omega_\Psi\setminus K),
\]
so
\[
w\in W_0^{1,p}(\Omega_\Psi\setminus K).
\]
Using $w$ as a test function and subtracting the weak relations gives
\[
\int_{\{v>u_1\}}
\ipg{
\normg{\gradg v}^{p-2}\gradg v
-
\normg{\gradg u_1}^{p-2}\gradg u_1
}{
\gradg v-\gradg u_1
}
\,dV_g
\leq0.
\]
Strict monotonicity of
\[
\xi\mapsto\normg{\xi}^{p-2}\xi
\]
implies
\[
\gradg(v-u_1)^+=0
\quad\text{a.e.}
\]
Since $(v-u_1)^+$ has zero trace,
\[
v\leq u_1
\quad\text{in }\Omega_\Psi\setminus K.
\]

At $x_0$,
\[
v(x_0)=u_1(x_0)=0.
\]
Assuming $v\not\equiv u_1$, the strong comparison principle and
the Hopf boundary point lemma give
\[
-\frac{\partial u_1}{\partial\nu_g}(x_0)
>
-\frac{\partial v}{\partial\nu_g}(x_0).
\]
The equality of the outward normals at $x_0$ in (RC) allows the same
normal to be used in this comparison.

For the stationary domains,
\[
-\frac{\partial u_1}{\partial\nu_g}
=c_{\Omega_1}
\quad\text{on }\partial\Omega_1,
\]
and
\[
-\frac{\partial u_2}{\partial\nu_g}
=c_{\Omega_2}
\quad\text{on }\partial\Omega_2.
\]
With $y_0=\Psi^{-1}(x_0)$, the flux condition in (RC) gives
\[
-\frac{\partial v}{\partial\nu_g}(x_0)
\geq
\kappa_\Psi c_{\Omega_2}.
\]
Consequently,
\[
c_{\Omega_1}
>
\kappa_\Psi c_{\Omega_2}
>
c_{\Omega_2}.
\]
Thus
\[
c_{\Omega_1}>c_{\Omega_2}.
\]
Since
\[
\lambda_\Omega
=
-\frac{p-1}{p}c_\Omega^p,
\]
the strict decrease of
$c\mapsto-\frac{p-1}{p}c^p$ yields
\[
\lambda_{\Omega_1}<\lambda_{\Omega_2}.
\]
\end{proof}

\begin{remark}[Euclidean case]
In Euclidean space, $\Psi_t(x)=tx$ with $0<t<1$ and
\[
-\partial_\nu u_t
=
\frac1t(-\partial_\nu u_{\Omega_2}),
\]
so $\kappa_\Psi=1/t>1$.
\end{remark}
\subsection{Boundary-Flux Stability}

Weak convergence of the state solutions is not sufficient by itself
to pass to the limit in the overdetermined boundary condition.
We therefore impose the following additional assumption.

\medskip
\noindent
\textbf{(SC) Boundary-flux stability.}

Let
\[
\Omega_j\in\Og,
\qquad
\Omega_j\xrightarrow{d_H^g}\Omega^\ast,
\qquad
\Omega^\ast\in\Og.
\]
Let
\[
u_j=u_{\Omega_j},
\qquad
u^\ast=u_{\Omega^\ast}.
\]
Assume that there exist boundary identifications
\[
\Phi_j:\partial\Omega^\ast\longrightarrow\partial\Omega_j
\]
such that
\[
\Phi_j\longrightarrow
\operatorname{Id}_{\partial\Omega^\ast}
\]
in the relevant boundary topology, and
\begin{equation}
\sup_{x\in\partial\Omega^\ast}
\left|
\frac{\partial u_j}{\partial\nu_g}
\bigl(\Phi_j(x)\bigr)
-
\frac{\partial u^\ast}{\partial\nu_g}(x)
\right|
\longrightarrow0.
\label{eq:SC}
\end{equation}

\begin{remark}
Hypothesis (SC) is used only for the passage to the limit in the
overdetermined boundary condition. The identification of the weak
limit of the state solutions with $u_{\Omega^\ast}$ is treated
separately through the stability of the state equation and the
strict monotonicity of the $p$-Laplacian.
\end{remark}


\subsection{Reference Domains for the Successive Approximation}

The successive approximation procedure of Ly--Seck relies in the
Euclidean setting on a nested family of balls. Such a family is not
canonically available on a general Riemannian manifold. We therefore
introduce the following additional hypothesis.

\medskip
\noindent
\textbf{(RA) Riemannian reference-family hypothesis.}

We assume that there exist an interval
\[
(R_-,R_+)\subset(0,\infty)
\]
and a family of admissible $C^2$ domains
\[
\{B_R\}_{R\in(R_-,R_+)}\subset\mathcal{O}_g
\]
such that $K\subset B_R$ for every $R$ and
\[
R_1<R_2\Longrightarrow\overline{B_{R_1}}\subset B_{R_2}.
\]
We further assume Hausdorff continuity in $R$. For every $R$, let
$U_R=u_{B_R}$ and assume that its outward normal derivative
is constant on $\partial B_R$. Define
\[
q(R):=-\frac{\partial U_R}{\partial\nu_g}.
\]
Assume that $q$ is continuous and strictly decreasing and that
\begin{equation}
\lim_{R\downarrow R_-}q(R)>c,\qquad
\lim_{R\uparrow R_+}q(R)<c.
\label{eq:reference-family-crossing}
\end{equation}
Thus there is a unique $R^\ast$ such that $q(R^\ast)=c$.

\subsection{Successive Approximation}

Fix $\delta>0$ and choose $R_N$ such that
\begin{equation}
|q(R_N)-c|<\delta.
\label{eq:initial-flux-approximation}
\end{equation}
Choose $V_0$ satisfying
\begin{equation}
0<V_0<\operatorname{Vol}_g(B_{R_N}),
\end{equation}
and define
\begin{equation}
\mathcal{O}_0=\{\Omega\in\mathcal{O}_g:\Omega\subset B_{R_N},\;\operatorname{Vol}_g(\Omega)=V_0\}.
\label{eq:O0}
\end{equation}
Assuming this class is nonempty, choose an optimizer $\Omega_0$.
Set $u_0=u_{\Omega_0}$ and
\[
c_0=\left(-\frac{p}{p-1}\lambda_{\Omega_0}\right)^{1/p}.
\]
Then, whenever the boundary is sufficiently regular,
$-\partial_{\nu_g}u_0=c_0$ on $\partial\Omega_0$.

Suppose recursively that $\Omega_j$ is constructed. Choose
$V_{j+1}<\operatorname{Vol}_g(\Omega_j)$ and set
\begin{equation}
\mathcal{O}_{j+1}=\{\Omega\in\mathcal{O}_g:\Omega\subset\Omega_j,\;\operatorname{Vol}_g(\Omega)=V_{j+1}\}.
\label{eq:recursive-admissible-class}
\end{equation}
Assuming nonemptiness and compactness, let $\Omega_{j+1}$ minimize $J_g$
on this class and define
\[
c_{j+1}=\left(-\frac{p}{p-1}\lambda_{\Omega_{j+1}}\right)^{1/p}.
\]
Then
\begin{equation}
-\frac{\partial u_{j+1}}{\partial\nu_g}=c_{j+1}\quad\text{on }\partial\Omega_{j+1}.
\label{eq:recursive-bernoulli}
\end{equation}

\begin{proposition}[Monotonicity of the successive constants]
\label{prop:successive-c-monotonicity}
If $\Omega_{j+1}\subsetneq\Omega_j$, then
\[
\lambda_{\Omega_{j+1}}>\lambda_{\Omega_j},\qquad c_{j+1}<c_j.
\]
\end{proposition}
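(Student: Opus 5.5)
The plan is to reduce the statement directly to Theorem~\ref{thm:monotonicity-lambda}, applied with $\Omega_1:=\Omega_{j+1}$ and $\Omega_2:=\Omega_j$.

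The first step is to verify the hypotheses of that theorem in the present setting. By construction, $\Omega_{j+1}\in\mathcal{O}_{j+1}\subset\Og$ and $\Omega_j\in\Og$. Both contain $K$, and the assumption gives $K\subset\Omega_{j+1}\subsetneq\Omega_j$. The regularity requirement (admissible $C^2$ domains) is the same standing assumption already used to write \eqref{eq:recursive-bernoulli}. We also assume that (RC) holds for the pair $(\Omega_{j+1},\Omega_j)$; the statement implicitly works under (RC), exactly as in Theorem~\ref{thm:monotonicity-lambda}.

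The second step is to check that both domains are stationary in the sense used in that proof. The proof of Theorem~\ref{thm:monotonicity-lambda} needs $-\partial_{\nu_g}u_i=c_{\Omega_i}$ on $\partial\Omega_i$. This is exactly \eqref{eq:recursive-bernoulli} for $\Omega_{j+1}$, and the same relation at the previous stage (or for $j=0$, the condition $-\partial_{\nu_g}u_0=c_0$) for $\Omega_j$. Both were obtained through the Lagrange multiplier argument leading to \eqref{eq:bernoulli-optimality}.

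\textbf{Main obstacle.} Each $\Omega_{j+1}$ minimizes $J_g$ only over the restricted class $\mathcal{O}_{j+1}$, which carries the extra constraint $\Omega\subset\Omega_j$. The multiplier rule therefore only yields the Bernoulli condition on the part of $\partial\Omega_{j+1}$ where arbitrary normal perturbations remain admissible, that is, where $\partial\Omega_{j+1}$ does not touch $\partial\Omega_j$. I would handle this in one of two ways. The first option is to note that the comparison in Theorem~\ref{thm:monotonicity-lambda} uses the flux only at the single contact point $x_0\in\partial\Omega_1$ and at $y_0=\Psi^{-1}(x_0)\in\partial\Omega_2$. It then suffices that the stationarity relations hold at these two points, which we add to (RC) or take to be part of the regularity assumption already invoked for \eqref{eq:recursive-bernoulli}. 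The second option is simply to take \eqref{eq:recursive-bernoulli}, as stated in the construction, to hold on all of $\partial\Omega_{j+1}$.

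With these points in place, Theorem~\ref{thm:monotonicity-lambda} gives
\[
c_{\Omega_{j+1}}>\kappa_\Psi\,c_{\Omega_j}>c_{\Omega_j},
\]
that is, $c_{j+1}<c_j$. Finally, $\lambda_\Omega=-\frac{p-1}{p}c_\Omega^p$, and the map $c\mapsto-\frac{p-1}{p}c^p$ is strictly decreasing on $(0,\infty)$. Hence $c_{j+1}<c_j$ gives $\lambda_{\Omega_{j+1}}>\lambda_{\Omega_j}$.
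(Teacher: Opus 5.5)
You follow the paper's route exactly: you apply Theorem~\ref{thm:monotonicity-lambda} with $\Omega_1:=\Omega_{j+1}$ and $\Omega_2:=\Omega_j$, then convert via $\lambda_\Omega=-\frac{p-1}{p}c_\Omega^p$. The last step fails. The chain you correctly read off from the theorem, $c_{\Omega_{j+1}}>\kappa_\Psi c_{\Omega_j}>c_{\Omega_j}$, is literally the statement $c_{j+1}>c_j$. Your ``that is, $c_{j+1}<c_j$'' reverses it, and the correct conversion then gives $\lambda_{\Omega_{j+1}}<\lambda_{\Omega_j}$.

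This cannot be patched. Theorem~\ref{thm:monotonicity-lambda} says that the \emph{smaller} domain carries the \emph{larger} Bernoulli constant. The paper's other computations agree:
\begin{itemize}
\item the Euclidean homothety gives $\kappa_\Psi=1/t>1$;
\item the spherical model gives $q_{\mathcal R}'<0$, so shrinking the ball raises the flux.
\end{itemize}
So under (RC) for the pair $(\Omega_{j+1},\Omega_j)$, the inequalities in the statement are contradicted by the very theorem you invoke. What this route actually proves is $c_{j+1}>c_j$ and $\lambda_{\Omega_{j+1}}<\lambda_{\Omega_j}$.

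The paper's own proof makes the same reversal. It asserts that Theorem~\ref{thm:monotonicity-lambda} yields $\lambda_{\Omega_{j+1}}>\lambda_{\Omega_j}$, which is the wrong direction when $\Omega_{j+1}\subsetneq\Omega_j$. The same slip recurs in the final sentence of Theorem~\ref{thm:monotonicity-c-riemannian}. The decreasing chain $c_0>c_1>\cdots$ used in the later stopping argument would therefore need to be reconsidered.

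Separately, your ``main obstacle'' paragraph raises a real point that the paper passes over. Because $\Omega_{j+1}$ minimizes only over domains contained in $\Omega_j$, the multiplier rule gives the Bernoulli condition only on the free part of $\partial\Omega_{j+1}$. The comparison, however, needs that condition at $x_0$ and at $y_0=\Psi^{-1}(x_0)$. Making this an explicit assumption is the right move, but it does not affect the sign problem above.
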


\begin{proof}
Since
\[
\Omega_{j+1}\subsetneq\Omega_j,
\]
the Riemannian contraction hypothesis {\rm (RC)} can be applied to the
pair $(\Omega_{j+1},\Omega_j)$. In particular, Theorem~\ref{thm:monotonicity-lambda}
gives
\[
\lambda_{\Omega_{j+1}}>\lambda_{\Omega_j}.
\]
Recall that both Lagrange multipliers are strictly negative and that the
Bernoulli constants are defined by
\[
c_j
=
\left(
-\frac{p}{p-1}\lambda_{\Omega_j}
\right)^{1/p},
\qquad
c_{j+1}
=
\left(
-\frac{p}{p-1}\lambda_{\Omega_{j+1}}
\right)^{1/p}.
\]
To determine the direction of the corresponding inequality for the
Bernoulli constants, consider
\[
F:(-\infty,0)\longrightarrow(0,\infty),
\qquad
F(\lambda)
=
\left(
-\frac{p}{p-1}\lambda
\right)^{1/p}.
\]
A direct differentiation gives
\[
F'(\lambda)
=
-\frac1{p}
\left(\frac{p}{p-1}\right)^{1/p}
(-\lambda)^{1/p-1}<0,
\qquad \lambda<0.
\]
Thus $F$ is strictly decreasing on $(-\infty,0)$. Therefore
\[
\lambda_{\Omega_{j+1}}>\lambda_{\Omega_j}
\quad\Longrightarrow\quad
F(\lambda_{\Omega_{j+1}})
<
F(\lambda_{\Omega_j}),
\]
that is,
\[
c_{j+1}<c_j.
\]
Equivalently, using
\[
\lambda_{\Omega_j}
=
-\frac{p-1}{p}c_j^p,
\]
the strict decrease of $c_j$ is exactly equivalent to the strict increase
of $\lambda_{\Omega_j}$. Hence
\[
\lambda_{\Omega_j}<\lambda_{\Omega_{j+1}}<0,
\qquad
c_{j+1}<c_j.
\]
In particular, every strict contraction of the admissible domain produces
a strict decrease of the Bernoulli constant.
\end{proof}

\subsection{Limit of the Successive Approximation}
\label{sec:limit-successive-approximation}

We now analyze the limiting behavior of the successive construction.
The argument separates the compactness and state-stability issues from
the identification of the limiting Bernoulli constant.

Suppose that the construction produces a nested sequence
\[
\Omega_0\supsetneq\Omega_1\supsetneq\Omega_2\supsetneq\cdots
\]
and, by the monotonicity theorem, a strictly decreasing sequence
\[
c_0>c_1>c_2>\cdots.
\]
Assume that every finite level remains strictly above the prescribed
constant:
\begin{equation}
c_j>c
\qquad\text{for every }j.
\label{eq:successive-c-bounded}
\end{equation}
Then there exists $\ell\ge c$ such that
\begin{equation}
c_j\downarrow\ell.
\label{eq:c-limit}
\end{equation}

The nestedness of the domains and the compactness assumptions on
$\mathcal O_g$ yield, after passage to a subsequence if necessary,
\[
\Omega_j\xrightarrow{d_H^g}\Omega^\ast
\]
for some $\Omega^\ast\in\mathcal O_g$. State stability identifies the
limit of the corresponding states with $u^\ast=u_{\Omega^\ast}$.
Thus
\begin{equation}
\begin{cases}
-\Delta_{p,g}u^\ast=0
&\text{in }\Omega^\ast\setminus K,\\
u^\ast=1
&\text{on }\partial K,\\
u^\ast=0
&\text{on }\partial\Omega^\ast.
\end{cases}
\label{eq:limit-state}
\end{equation}

By (SC), the normal derivatives are stable under the boundary
identifications. Since
\[
-\partial_{\nu_g}u_j=c_j
\qquad\text{on }\partial\Omega_j,
\]
we obtain
\begin{equation}
-\partial_{\nu_g}u^\ast=\ell
\qquad\text{on }\partial\Omega^\ast.
\label{eq:limit-bernoulli-level}
\end{equation}
Hence the limiting pair solves the Bernoulli problem with constant
$\ell$. The stopping argument below identifies $\ell$ with the
prescribed constant $c$.

\begin{proposition}[Limit problem]
\label{prop:limit-problem}
Under the compactness, state-stability and boundary-flux stability
assumptions, if
\[
\Omega_j\to\Omega^\ast,
\qquad
c_j\downarrow\ell,
\]
then the limiting pair satisfies
\begin{equation}
\begin{cases}
-\Delta_{p,g}u^\ast=0
&\text{in }\Omega^\ast\setminus K,\\
u^\ast=1
&\text{on }\partial K,\\
u^\ast=0
&\text{on }\partial\Omega^\ast,\\
-\partial_{\nu_g}u^\ast=\ell
&\text{on }\partial\Omega^\ast.
\end{cases}
\end{equation}
\end{proposition}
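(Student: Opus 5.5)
The plan is to treat the first three lines and the Bernoulli line separately, since the interior problem is handled by the state-stability machinery and only the boundary flux needs (SC).

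\emph{Step 1: the limit state.} By the compactness assumption on $\mathcal O_g$ (Proposition~\ref{prop:hausdorff-compactness}), I extract a subsequence with $\Omega_j\xrightarrow{d_H^g}\Omega^\ast\in\mathcal O_g$. By the uniform energy bound \eqref{eq:uniform-energy-bound} and Assumption~\ref{hyp:extension}, $E_{\Omega_j}u_j$ is bounded in $W^{1,p}(M)$. Along a further subsequence it converges weakly to some $u^\ast$, and strongly in $L^p(M)$. For any $\varphi\in C_c^\infty(\Omega^\ast\setminus K)$, the support of $\varphi$ lies in $\Omega_j\setminus K$ for large $j$, so $\varphi$ is an admissible test function for $u_j$. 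The Minty argument of the subsection on identification of the limit state then gives $-\Delta_{p,g}u^\ast=0$ weakly. The trace conditions pass to the limit: $u^\ast=1$ on $\partial K$ because $K$ is fixed, and $u^\ast=0$ on $\partial\Omega^\ast$ by Hausdorff convergence together with the uniform cone condition. Proposition~\ref{prop:state-wellposed} then identifies $u^\ast=u_{\Omega^\ast}$. Uniqueness of this limit shows the whole extracted subsequence converges, so the chosen subsequence is harmless. This yields \eqref{eq:limit-state}, the first three lines of the system.

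\emph{Step 2: the Bernoulli condition.} Fix $x\in\partial\Omega^\ast$. Since $\Phi_j(x)\in\partial\Omega_j$, the recursive Bernoulli relation \eqref{eq:recursive-bernoulli} gives $-\partial_{\nu_g}u_j(\Phi_j(x))=c_j$. By the triangle inequality,
\[
\bigl|-\partial_{\nu_g}u^\ast(x)-\ell\bigr|
\le
\sup_{y\in\partial\Omega^\ast}\Bigl|\partial_{\nu_g}u_j(\Phi_j(y))-\partial_{\nu_g}u^\ast(y)\Bigr|
+|c_j-\ell|.
\]
The first term tends to zero by \eqref{eq:SC}, and the second tends to zero by \eqref{eq:c-limit}. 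Hence $-\partial_{\nu_g}u^\ast=\ell$ on $\partial\Omega^\ast$, uniformly.

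\emph{Main obstacle.} The delicate point is making sure (SC) is applied to the correct objects. The $u^\ast$ appearing in (SC) is $u_{\Omega^\ast}$, so Step 1 must identify the weak limit with the state on the limit domain before (SC) is invoked. Step 1 also needs the normal derivative of $u^\ast$ to exist classically on $\partial\Omega^\ast$; I will take this as part of the regularity implicit in (SC). In addition, the relation $-\partial_{\nu_g}u_j=c_j$ requires each $\Omega_j$ to be regular enough for the optimality condition \eqref{eq:bernoulli-optimality}, which the construction assumes. With these standing assumptions made explicit, the remaining argument is the elementary estimate in Step 2.
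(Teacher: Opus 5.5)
Your proposal is correct and follows the paper's proof essentially step for step. You first identify the limit of the states with $u_{\Omega^\ast}$ via the uniform extension, the Minty argument and trace stability, and then combine (SC) with $-\partial_{\nu_g}u_j=c_j$ on $\partial\Omega_j$ and $c_j\to\ell$; your triangle inequality just writes out the paper's Step 3 explicitly, and re-extracting a subsequence for $\Omega_j$ is unnecessary since $\Omega_j\to\Omega^\ast$ is already assumed.
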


\begin{proof}
The proof consists of two logically distinct parts: first, we identify the
limit of the state equation; second, we pass to the limit in the
overdetermined boundary condition.

\medskip
\noindent\textbf{Step 1: convergence of the domains and of the states.}
By assumption,
\[
\Omega_j\xrightarrow{d_H^g}\Omega^\ast,
\qquad
\Omega^\ast\in\mathcal O_g.
\]
The compactness and state-stability hypotheses for the admissible class
allow us, after extraction of a subsequence when necessary, to obtain
\[
u_j\rightharpoonup u^\ast
\qquad\text{in }W^{1,p}(M),
\]
with the limit identified by the stability of the state equation as
\[
u^\ast=u_{\Omega^\ast}.
\]
Consequently,
\[
-\Delta_{p,g}u^\ast=0
\qquad\text{in }\Omega^\ast\setminus K.
\]

\medskip
\noindent\textbf{Step 2: passage to the Dirichlet boundary conditions.}
For every $j$,
\[
u_j=1\quad\text{on }\partial K,
\qquad
u_j=0\quad\text{on }\partial\Omega_j.
\]
The inner boundary $\partial K$ is fixed, so the trace stability gives
\[
u^\ast=1\quad\text{on }\partial K.
\]
The convergence of the outer boundaries in the topology of the
admissible class, together with the corresponding trace stability, gives
\[
u^\ast=0\quad\text{on }\partial\Omega^\ast.
\]
Thus $u^\ast$ is precisely the state associated with $\Omega^\ast$:
\[
\begin{cases}
-\Delta_{p,g}u^\ast=0
&\text{in }\Omega^\ast\setminus K,\\
u^\ast=1
&\text{on }\partial K,\\
u^\ast=0
&\text{on }\partial\Omega^\ast.
\end{cases}
\]

\medskip
\noindent\textbf{Step 3: passage to the Bernoulli condition.}
For every $j$, the first-order optimality condition gives
\[
-\frac{\partial u_j}{\partial\nu_g}=c_j
\qquad\text{on }\partial\Omega_j.
\]
By hypothesis,
\[
c_j\downarrow\ell.
\]
The difficulty is that weak convergence in $W^{1,p}$ alone does not
permit one to pass directly to normal derivatives on moving
boundaries. This is precisely the reason for introducing the
boundary-flux stability hypothesis {\rm (SC)}.

Let
\[
\Phi_j:\partial\Omega^\ast\longrightarrow\partial\Omega_j
\]
be the boundary identifications furnished by {\rm (SC)}. Then
\[
\sup_{x\in\partial\Omega^\ast}
\left|
\frac{\partial u_j}{\partial\nu_g}(\Phi_j(x))
-
\frac{\partial u^\ast}{\partial\nu_g}(x)
\right|
\longrightarrow0.
\]
Since
\[
\frac{\partial u_j}{\partial\nu_g}=-c_j
\qquad\text{on }\partial\Omega_j,
\]
we have, uniformly for $x\in\partial\Omega^\ast$,
\[
\left|
-c_j-\frac{\partial u^\ast}{\partial\nu_g}(x)
\right|
\longrightarrow0.
\]
Passing to the limit and using $c_j\to\ell$ yields
\[
\frac{\partial u^\ast}{\partial\nu_g}=-\ell
\qquad\text{on }\partial\Omega^\ast,
\]
or equivalently
\[
-\frac{\partial u^\ast}{\partial\nu_g}
=\ell
\qquad\text{on }\partial\Omega^\ast.
\]
Therefore $(\Omega^\ast,u^\ast)$ satisfies the overdetermined Bernoulli
problem with Bernoulli constant $\ell$.
\end{proof}

\subsection{Asymptotically Conformal Contractions and Flux Stability}
\label{sec:asymptotically-conformal-contractions}

We make precise the geometric condition under which the flux of the
transported reference state approaches the reference flux. This replaces
an abstract assumption on the limiting Bernoulli level by a condition on
the first jet of the Riemannian contractions.

We distinguish carefully between the index $j$, which labels the
successive approximation scheme, and the parameter $\varepsilon$, which
will be used below for a one-parameter family of contractions in the
spherical model. Thus, at the $j$-th step, $\Psi_j$ denotes the
diffeomorphism realizing the contraction from the current domain to the
next one,
\[
\Psi_j:\Omega_j\longrightarrow\Omega_{j+1},
\qquad
\Psi_j(\Omega_j)=\Omega_{j+1}.
\]
The adjective ``contraction'' refers here to the inclusion
$\Omega_{j+1}\subset\Omega_j$; it does not require
$\|d\Psi_j\|_g<1$ pointwise.

For a contraction $\Psi_j$ and a contact point $x_j$, set
\[
y_j:=\Psi_j^{-1}(x_j),
\qquad
U_j:=U_{R_N}\circ\Psi_j^{-1},
\]
and define the corresponding transported boundary flux by
\begin{equation}
Q_j:=-\partial_{\nu_g}U_j(x_j).
\label{eq:Qj-definition-final}
\end{equation}
Thus $Q_j$ belongs to the successive approximation scheme and is
associated with the particular map $\Psi_j$.
The chain rule for the Riemannian gradient gives
\begin{equation}
\nabla_g U_j(x_j)
=
\bigl(d\Psi_j^{-1}(x_j)\bigr)^*
\nabla_g U_{R_N}(y_j).
\label{eq:transported-gradient-final}
\end{equation}
Hence
\begin{equation}
Q_j
=
-g_{x_j}\!\left(
\bigl(d\Psi_j^{-1}(x_j)\bigr)^*
\nabla_g U_{R_N}(y_j),
\nu_g(x_j)
\right).
\label{eq:transported-flux-final}
\end{equation}

\begin{definition}[Asymptotically conformal contractions]
\label{def:asymptotically-conformal}
Let $(\Psi_j)$ be a sequence of Riemannian contractions and let
$x_j$ be the associated contact points. We say that $(\Psi_j)$ is
\emph{asymptotically conformal at the contact points} if there exist
numbers $\mu_j\in(0,1)$, isometries
\[
P_j:T_{y_j}M\longrightarrow T_{x_j}M,
\]
and numbers $\varepsilon_j\downarrow0$ such that
\begin{equation}
\mu_j\longrightarrow1,
\qquad
\|d\Psi_j(y_j)-\mu_jP_j\|_g
\leq\varepsilon_j,
\label{eq:first-jet-convergence}
\end{equation}
and
\begin{equation}
\|\nu_g(x_j)-P_j\nu_g(y_j)\|_g
\leq\varepsilon_j.
\label{eq:normal-compatibility}
\end{equation}
\end{definition}

The next lemma gives the precise consequence of this condition for the
normal flux.

\begin{lemma}[Asymptotic stability of the transported flux]
\label{lem:asymptotic-flux-stability}
Assume that $U_{R_N}$ is $C^1$ in a neighborhood of
$\partial\mathcal B_{R_N}$ and that
\[
-\partial_{\nu_g}U_{R_N}=q(R_N)
\qquad\text{on }\partial\mathcal B_{R_N}.
\]
If the contractions $(\Psi_j)$ are asymptotically conformal at the
contact points in the sense of Definition~\ref{def:asymptotically-conformal},
then
\begin{equation}
Q_j-q(R_N)\longrightarrow0.
\label{eq:Qj-to-qRN}
\end{equation}
\end{lemma}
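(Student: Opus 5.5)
The plan is to reduce \eqref{eq:transported-flux-final} to a purely linear-algebraic expression at the pair of points $(y_j,x_j)$, and then use the two estimates \eqref{eq:first-jet-convergence} and \eqref{eq:normal-compatibility} of Definition~\ref{def:asymptotically-conformal}.

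\textbf{Step 1: rewrite $Q_j$.} First I will check that $y_j=\Psi_j^{-1}(x_j)$ lies on $\partial\mathcal B_{R_N}$. This should follow because $\Psi_j$ is a diffeomorphism of closures that maps the outer boundary onto the outer boundary and fixes $\partial K$, so the contact point $x_j$ on the outer boundary pulls back to the outer boundary. Since $U_{R_N}=0$ on $\partial\mathcal B_{R_N}$, is $C^1$ there, and has constant normal derivative, its gradient at $y_j$ is purely normal:
\[
\nabla_g U_{R_N}(y_j)=-q(R_N)\,\nu_g(y_j).
\]
Write $A_j:=d\Psi_j(y_j)$, so that $d\Psi_j^{-1}(x_j)=A_j^{-1}$. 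Inserting this gradient into \eqref{eq:transported-flux-final} and using the definition of the adjoint gives
\[
Q_j=q(R_N)\,g_{y_j}\!\bigl(\nu_g(y_j),\,A_j^{-1}\nu_g(x_j)\bigr).
\]
It therefore suffices to show that the inner product on the right tends to $1$.

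\textbf{Step 2: control $A_j^{-1}$.} This is the step I expect to be the main obstacle, because Definition~\ref{def:asymptotically-conformal} controls $A_j$ and not its inverse. Write
\[
A_j=\mu_jP_j(I+E_j),\qquad E_j:=\mu_j^{-1}P_j^{-1}\bigl(A_j-\mu_jP_j\bigr).
\]
Since $P_j$ is an isometry, $\|E_j\|_g\le\varepsilon_j/\mu_j$, and this tends to $0$ because $\mu_j\to1$. For large $j$ we have $\|E_j\|_g<1/2$, and a Neumann series gives
\[
\bigl\|A_j^{-1}-\mu_j^{-1}P_j^{-1}\bigr\|_g\le \frac{\mu_j^{-1}\|E_j\|_g}{1-\|E_j\|_g}\le 2\mu_j^{-2}\varepsilon_j\longrightarrow0.
\]

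\textbf{Step 3: conclude.} Using \eqref{eq:normal-compatibility} and the isometry $P_j^{-1}$,
\[
\bigl\|P_j^{-1}\nu_g(x_j)-\nu_g(y_j)\bigr\|_g\le\varepsilon_j .
\]
Combined with Step 2, this gives $\|A_j^{-1}\nu_g(x_j)-\mu_j^{-1}\nu_g(y_j)\|_g\lesssim\varepsilon_j$. By Cauchy--Schwarz, and since $\nu_g(y_j)$ is a unit vector,
\[
\bigl|g\bigl(\nu_g(y_j),A_j^{-1}\nu_g(x_j)\bigr)-\mu_j^{-1}\bigr|\lesssim\varepsilon_j .
\]
Hence $|Q_j-q(R_N)\mu_j^{-1}|\le C\,q(R_N)\,\varepsilon_j$, and $\mu_j\to1$ yields \eqref{eq:Qj-to-qRN}. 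As a side remark, the same computation shows $Q_j\approx\mu_j^{-1}q(R_N)>q(R_N)$ up to $O(\varepsilon_j)$, which matches the Euclidean factor $\kappa_\Psi=1/t$.
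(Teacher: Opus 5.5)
Your proof is correct and follows essentially the same route as the paper. Both arguments use the chain-rule formula \eqref{eq:transported-flux-final}, the approximation $d\Psi_j^{-1}(x_j)=\mu_j^{-1}P_j^{-1}+o(1)$ from the first-jet condition, the normal compatibility \eqref{eq:normal-compatibility}, and the constancy of the reference flux at $y_j$.

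You make explicit two points the paper passes over quickly. First, you give a Neumann-series bound for the inverse differential, where the paper only cites invertibility. Second, you state the requirement $y_j\in\partial\mathcal B_{R_N}$, which the paper also uses tacitly. This holds when $\Psi_j$ is read, as in (RC), as a diffeomorphism defined on $\overline{\mathcal B_{R_N}\setminus K}$ rather than on $\Omega_j$ as literally written.
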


\begin{proof}
By \eqref{eq:first-jet-convergence} and the invertibility of
$d\Psi_j(y_j)$,
\[
d\Psi_j^{-1}(x_j)
=\mu_j^{-1}P_j^{-1}+o(1),
\]
and consequently
\[
\bigl(d\Psi_j^{-1}(x_j)\bigr)^*
=\mu_j^{-1}P_j+o(1)
\]
in operator norm. Since $\mu_j\to1$, the gradients
$\nabla_gU_{R_N}(y_j)$ remain bounded and the normal compatibility
\eqref{eq:normal-compatibility} yields
\[
Q_j
=
-\partial_{\nu_g}U_{R_N}(y_j)+o(1).
\]
The reference flux is constant on $\partial\mathcal B_{R_N}$, so
\[
-\partial_{\nu_g}U_{R_N}(y_j)=q(R_N).
\]
Therefore
\[
Q_j=q(R_N)+o(1),
\]
which proves \eqref{eq:Qj-to-qRN}.
\end{proof}

\begin{remark}
The asymptotic conformality assumption is a sufficient geometric
condition, not a consequence of (RC). In particular, (RC) controls the
flux in the direction needed for the comparison argument, whereas
Definition~\ref{def:asymptotically-conformal} controls the distortion of
the contraction strongly enough to identify its limiting flux.
\end{remark}

\begin{proposition}[Riemannian comparison at a contact point]
\label{prop:riemannian-contact-comparison}
Let
\[
K\subset\Omega_1\subset\Omega_2
\]
be admissible domains satisfying the Riemannian contraction hypothesis
{\rm (RC)}. Let
\[
u_i:=u_{\Omega_i},
\qquad i=1,2,
\]
and let
\[
v=u_2\circ\Psi^{-1}
\]
be the transported state in $\Omega_\Psi\setminus K$, where
$\Psi$ and the contact point $x_0$ are given by {\rm (RC)}. Then
\[
-\partial_{\nu_g}u_1(x_0)
>
-\partial_{\nu_g}v(x_0).
\label{eq:hopf-contact-comparison}
\]
Moreover, {\rm (RC)} gives
\[
-\partial_{\nu_g}v(x_0)
\geq
\kappa_\Psi
\bigl(-\partial_{\nu_g}u_2(y_0)\bigr),
\qquad
y_0=\Psi^{-1}(x_0),
\label{eq:RC-flux-comparison}
\]
with $\kappa_\Psi>1$. Consequently,
\begin{equation}
c_{\Omega_1}>\kappa_\Psi c_{\Omega_2}>c_{\Omega_2}.
\label{eq:riemannian-c-monotonicity}
\end{equation}
\end{proposition}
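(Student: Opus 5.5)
The argument repeats the contact-point part of the proof of Theorem~\ref{thm:monotonicity-lambda}, split into the three claims of the statement. The plan has three steps: a weak comparison $v\le u_1$ in $\Omega_\Psi\setminus K$, then a strict Hopf-type inequality at $x_0$, then the chain of inequalities obtained by combining with the flux hypothesis \eqref{eq:RC-flux} and the Bernoulli conditions \eqref{eq:bernoulli-optimality} on $\partial\Omega_1$ and $\partial\Omega_2$. Throughout I assume, as in Theorem~\ref{thm:monotonicity-lambda}, that $\Omega_1,\Omega_2$ are stationary (optimal, $C^2$) domains, so that $-\partial_{\nu_g}u_i=c_{\Omega_i}$ on $\partial\Omega_i$.

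\textbf{Weak comparison.} Since $\Omega_\Psi\subset\Omega_1$, $u_1$ restricts to a weak $p$-harmonic function on $\Omega_\Psi\setminus K$. On $\partial K$ we have $u_1=v=1$. On $\partial\Omega_\Psi$ we have $v=0\le u_1$, by the weak maximum principle for $u_1$. Hence $w=(v-u_1)^+\in W^{1,p}_0(\Omega_\Psi\setminus K)$ and $w\ge0$. I will approximate $w$ by nonnegative test functions in \eqref{eq:RC-subsolution}, test the equation of $u_1$ with $w$, and subtract. Strict monotonicity of $\xi\mapsto\normg{\xi}^{p-2}\xi$ then forces $\gradg w=0$, so $v\le u_1$.

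\textbf{Strict boundary inequality.} Both functions vanish at $x_0\in\partial\Omega_\Psi\cap\partial\Omega_1$, and their outward normals coincide there by (RC). Since $v\le u_1$ with $v(x_0)=u_1(x_0)$, we get directly $-\partial_{\nu_g}u_1(x_0)\ge-\partial_{\nu_g}v(x_0)$. Strictness is the main obstacle. For $p\neq2$ the operator is degenerate or singular, so the strong comparison principle is not automatic. I would first use Hopf's lemma for $u_1$ to get $|\gradg u_1(x_0)|\ge c_{\Omega_1}>0$. By $C^{1,\alpha}$ regularity up to the $C^2$ boundary, $\gradg u_1\neq0$ in a neighbourhood $N$ of $x_0$ in $\Omega_\Psi\setminus K$. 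In $N$ the difference $u_1-v$ satisfies a linear, uniformly elliptic inequality whose coefficients come from linearizing $A_x$ along the segment between $\gradg v$ and $\gradg u_1$. This needs $\gradg v$ close to $\gradg u_1$, or at least nonvanishing near $x_0$. That holds because $-\partial_{\nu_g}v(x_0)\ge\kappa_\Psi c_{\Omega_2}>0$, again by Hopf's lemma for $u_2$ and \eqref{eq:RC-flux}. On $N$ I then apply the strong maximum principle and the linear Hopf lemma to $u_1-v\ge0$. This gives either $u_1\equiv v$ near $x_0$, or $\partial_{\nu_g}(u_1-v)(x_0)<0$.

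\textbf{Ruling out equality and the final chain.} The case $u_1\equiv v$ is excluded as in Theorem~\ref{thm:monotonicity-lambda}. Propagating the identity by unique continuation in the nondegenerate region would give $-\partial_{\nu_g}v(x_0)=c_{\Omega_1}$. It would also force $\Omega_\Psi=\Omega_1$ locally, which contradicts the setting of (RC), where $\Omega_\Psi$ is produced by a genuine contraction of $\Omega_2\supsetneq\Omega_1$. If that argument is too weak, I would instead assume $v\not\equiv u_1$, exactly as the paper does in Theorem~\ref{thm:monotonicity-lambda}. Finally I chain the inequalities:
\[
c_{\Omega_1}=-\partial_{\nu_g}u_1(x_0)>-\partial_{\nu_g}v(x_0)\ge\kappa_\Psi\bigl(-\partial_{\nu_g}u_2(y_0)\bigr)=\kappa_\Psi c_{\Omega_2}>c_{\Omega_2}.
\]
The first equality holds because $x_0\in\partial\Omega_1$, and the last equality because $y_0=\Psi^{-1}(x_0)\in\partial\Omega_2$. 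The last strict inequality uses $\kappa_\Psi>1$ and $c_{\Omega_2}>0$.
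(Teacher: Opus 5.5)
\paragraph{Comparison with the paper.} Your three steps are the paper's proof: the weak comparison $v\le u_1$ via the test function $(v-u_1)^+$, the strict Hopf-type inequality at $x_0$ using the common normal from {\rm (RC)}, and the chain through the flux condition of {\rm (RC)} and the Bernoulli conditions on $\partial\Omega_1$ and $\partial\Omega_2$. Your linearization near $x_0$ is more careful than the paper's one-line appeal to a strong comparison principle for $p\neq2$. Two points to fix there. First, uniform ellipticity of the averaged coefficients requires the whole segment between $\gradg v$ and $\gradg u_1$ to stay away from $0$; nonvanishing of each gradient separately is not enough. It does hold near $x_0$, because both gradients at $x_0$ are negative multiples of the common normal. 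Second, the divergence-form Hopf lemma needs Hölder-continuous coefficients. These come from $C^{1,\alpha}$ regularity of both $u_1$ and $v=u_2\circ\Psi^{-1}$.

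\paragraph{The gap: excluding $u_1\equiv v$.} Your attempt to rule out $u_1\equiv v$ near $x_0$ fails. {\rm (RC)} only requires $\Omega_\Psi\subset\Omega_1$, not strict inclusion or one-point contact, so local coincidence $\Omega_\Psi=\Omega_1$ contradicts nothing.

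The paper's own spherical model realizes this case. Take $\Omega_2=\Omega_L$, $\Omega_1=\Omega_\Psi=\Omega_{L'}$ and $\Psi=\Psi_{L,L'}$, regarding the balls as admissible as the paper does. Step~9 of the example gives $v=U_{L,L'}=u_{L'}=u_1$. Every requirement of {\rm (RC)} then holds with $\kappa_\Psi=\kappa_{L,L'}>1$. Yet both the first claim of the statement and the inequality $c_{\Omega_1}>\kappa_\Psi c_{\Omega_2}$ reduce to $q_{\mathcal R}(L')>q_{\mathcal R}(L')$, which is false.

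So those strict inequalities hold only under the extra hypothesis $v\not\equiv u_1$ near $x_0$. That is exactly what the paper's proof assumes (``If $v\not\equiv u_1$''). Your fallback therefore reproduces the paper's argument, but you should state $v\not\equiv u_1$ as a hypothesis rather than claim to derive it from {\rm (RC)}.

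The conclusion that matters for monotonicity needs none of this. You already obtain the weak inequality $-\partial_{\nu_g}u_1(x_0)\ge-\partial_{\nu_g}v(x_0)$ directly from $v\le u_1$ and $v(x_0)=u_1(x_0)$. Combined with the flux condition, it gives $c_{\Omega_1}\ge\kappa_\Psi c_{\Omega_2}>c_{\Omega_2}$ unconditionally, without Hopf or strong comparison.
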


\begin{proof}
By {\rm (RC)}, $v$ is a weak subsolution and satisfies
\[
v=1\quad\text{on }\partial K,
\qquad
v=0\quad\text{on }\partial\Omega_\Psi.
\]
The comparison principle gives
\[
v\leq u_1
\qquad\text{in }\Omega_\Psi\setminus K.
\]
At the contact point $x_0$,
\[
v(x_0)=u_1(x_0)=0.
\]
If $v\not\equiv u_1$, the strong comparison principle and Hopf's
boundary point lemma yield
\[
-\partial_{\nu_g}u_1(x_0)
>
-\partial_{\nu_g}v(x_0).
\]
The equality of the outward normals at the contact point, required in
{\rm (RC)}, permits both derivatives to be computed with respect to the
same normal.

Now let
\[
y_0=\Psi^{-1}(x_0).
\]
Since $\Omega_2$ is stationary,
\[
-\partial_{\nu_g}u_2(y_0)=c_{\Omega_2}.
\]
The boundary-flux part of {\rm (RC)} gives
\[
-\partial_{\nu_g}v(x_0)
\geq
\kappa_\Psi c_{\Omega_2}.
\]
Since
\[
-\partial_{\nu_g}u_1=c_{\Omega_1}
\qquad\text{on }\partial\Omega_1,
\]
we obtain
\[
c_{\Omega_1}>\kappa_\Psi c_{\Omega_2}>c_{\Omega_2}.
\]
\end{proof}

\begin{remark}
This proposition shows that the subsolution property and the flux
comparison are part of the Riemannian contraction hypothesis {\rm (RC)}.
They should not be re-derived from the mere fact that $\Psi$ is a
diffeomorphism. The spherical radial construction of
Section~\ref{sec:spherical-radial-contraction} provides a concrete model
in which the transport can be analyzed explicitly.
\end{remark}

\subsection{Transformation of the $p$-Laplacian under a Riemannian contraction}
\label{sec:transformation-p-laplacian}

We make precise the role of the contraction $\Psi_j$. A crucial
point is that a general diffeomorphism does not preserve the
$p$-Laplacian associated with the fixed metric $g$. If
\[
\Psi:U\longrightarrow V
\]
is a diffeomorphism and
\[
h:=\Psi^\ast g
\]
is the pullback metric on $U$, then the $p$-Laplacian is natural with
respect to the pair $(g,h)$, not with respect to $g$ on both sides.
This distinction is essential in the Riemannian adaptation.

Let $u$ be a smooth function on $U$ and set
\[
U^\Psi:=u\circ\Psi^{-1}
\qquad\text{on }V.
\]
For every $\varphi\in C_c^\infty(V)$, put
\[
\widehat\varphi:=\varphi\circ\Psi.
\]
Then the change-of-variables formula gives
\begin{align}
&\int_V
|\nabla_g U^\Psi|_g^{p-2}
\left\langle
\nabla_g U^\Psi,\nabla_g\varphi
\right\rangle_g\,dV_g
\nonumber\\
&\qquad=
\int_U
|\nabla_h u|_h^{p-2}
\left\langle
\nabla_h u,\nabla_h\widehat\varphi
\right\rangle_h\,dV_h.
\label{eq:pullback-p-laplace-weak}
\end{align}
Consequently,
\[
\Delta_{p,g}U^\Psi
=
0\quad\text{in }V
\]
is equivalent to
\[
\Delta_{p,h}u
=
0\quad\text{in }U,
\]
where
\[
h=\Psi^\ast g.
\]

In particular, even if
\[
\Delta_{p,g}u=0
\quad\text{in }U,
\]
one cannot conclude in general that
\[
\Delta_{p,g}U^\Psi=0
\quad\text{in }V.
\]
Such a conclusion requires an additional compatibility condition between
$h=\Psi^\ast g$ and $g$. This is the precise analytical content that
must be imposed in the Riemannian contraction hypothesis (RC).

\begin{definition}[Riemannian $p$-harmonicity residual]
\label{def:riemannian-p-harmonic-residual}
For a diffeomorphism $\Psi:U\to V$, define the residual of the transported
function $U^\Psi=u\circ\Psi^{-1}$ by
\begin{align}
\mathcal R_{\Psi,g}[u](\widehat\varphi)
:={}&
\int_U
|\nabla_h u|_h^{p-2}
\left\langle
\nabla_hu,\nabla_h\widehat\varphi
\right\rangle_h\,dV_h
\nonumber\\
&-
\int_U
|\nabla_g u|_g^{p-2}
\left\langle
\nabla_gu,\nabla_g\widehat\varphi
\right\rangle_g\,dV_g,
\label{eq:riemannian-p-residual}
\end{align}
where $h=\Psi^\ast g$.
If $u$ is $p$-harmonic with respect to $g$, then
\[
\mathcal R_{\Psi,g}[u]
\]
measures exactly the failure of $U^\Psi$ to be $p$-harmonic with respect
to the fixed metric $g$.
\end{definition}

\begin{assumption}[Riemannian contraction compatibility]
\label{ass:RC-p-laplace}
We assume that the contractions $\Psi_j$ satisfy a uniform
sign condition on the residual in the comparison region, strong enough
to imply that the transported reference state $U_j$ is a weak
subsolution (respectively supersolution, according to the orientation
of the comparison) of the $p$-Laplace equation with respect to $g$.
We additionally assume that the residual tends to zero in the
asymptotic regime:
\[
\mathcal R_{\Psi_j,g}[u_{R_N}]
\longrightarrow0
\]
in the dual of the relevant test-function space.
\end{assumption}

\begin{remark}
Condition \textnormal{(RC)} should therefore not be interpreted merely
as a bound on $d\Psi_j$. The weak identity
\eqref{eq:pullback-p-laplace-weak} shows that the relevant object is the
difference between the $p$-energy tensors associated with $g$ and
$\Psi_j^\ast g$. Bounds on the first jet of $\Psi_j$ can be used to
verify this condition in concrete geometries, but they do not by
themselves imply the required sign condition.
\end{remark}

\subsection{Example: A concrete spherical model: explicit radial construction and contraction}
\label{sec:spherical-radial-contraction}

We give a completely explicit example on a curved manifold. The goal is
not only to display a radial solution, but to carry out, step by step, all
the constructions entering the abstract framework: the reference domains,
the $p$-harmonic states, the Bernoulli constant, the monotonicity with
respect to the outer radius, the contraction map, the transported state,
and the exact transformation of the boundary flux. This example also makes
clear which part of the abstract contraction hypothesis~{\rm (RC)} is
verified exactly in the spherical model and which part remains an additional
comparison assumption in the general theorem.

\medskip
\noindent
\textbf{Step 1: the geometric setting.}

We work on the round sphere of radius $\mathcal R>0$,
\[
(M,g)=\bigl(S^n_{\mathcal R},g_{\mathcal R}\bigr),
\qquad n\ge2,
\]
and fix a centre $x_0\in S^n_{\mathcal R}$. Let
\[
0<\rho<\frac{\pi\mathcal R}{2}
\]
and take as fixed inner obstacle the closed geodesic ball
\[
K=\overline{B_g(x_0,\rho)}.
\]
For every radius
\[
L\in\left(\rho,\frac{\pi\mathcal R}{2}\right),
\]
we define the admissible outer domain
\[
\Omega_L:=B_g(x_0,L).
\]
Thus $L$ is, by definition, the \emph{outer geodesic radius} of the
domain $\Omega_L$, while $\rho$ is the fixed geodesic radius of the
inner obstacle $K$. In particular,
\[
K\subset\Omega_L,
\qquad
\partial K=\{r=\rho\},
\qquad
\partial\Omega_L=\{r=L\}.
\]
The restriction $L<\pi\mathcal R/2$ is convenient because the area density
of geodesic spheres is then strictly increasing.

\medskip
\noindent
\textbf{Step 2: geodesic polar coordinates and the radial density.}

In geodesic polar coordinates $(r,\theta)$ centred at $x_0$, the round
metric is
\[
g_{\mathcal R}
=dr^2+\mathcal R^2\sin^2(r/\mathcal R)
 g_{\mathbb S^{n-1}}.
\]
The Riemannian volume element is therefore
\[
dV_{g_{\mathcal R}}
=J_{\mathcal R}(r)\,dr\,d\theta,
\qquad
J_{\mathcal R}(r)
:=\mathcal R^{n-1}\sin^{n-1}(r/\mathcal R).
\]
The function $J_{\mathcal R}$ is positive on $(0,\pi\mathcal R)$ and,
for $0<r<\pi\mathcal R/2$,
\[
J_{\mathcal R}'(r)
=(n-1)\mathcal R^{n-2}
\sin^{n-2}(r/\mathcal R)\cos(r/\mathcal R)>0.
\]
Hence $J_{\mathcal R}$ is strictly increasing on the whole interval in
which our reference family is considered.

\medskip
\noindent
\textbf{Step 3: explicit radial $p$-harmonic state.}

Fix $L\in(\rho,\pi\mathcal R/2)$. We consider
\[
\begin{cases}
-\Delta_{p,g_{\mathcal R}}u_L=0
&\text{in }\Omega_L\setminus K,\\
 u_L=1&\text{on }\partial K,\\
 u_L=0&\text{on }\partial\Omega_L.
\end{cases}
\tag{S$_L$}
\]
By rotational symmetry, we look for a radial solution
\[
u_L(x)=\varphi_L(r).
\]
For a radial function $u(x)=\varphi(r)$, one has
\[
\Delta_{p,g_{\mathcal R}}u
=
\frac1{J_{\mathcal R}(r)}
\frac{d}{dr}
\left(
J_{\mathcal R}(r)|\varphi'(r)|^{p-2}\varphi'(r)
\right).
\]
Consequently, equation (S$_L$) is equivalent to
\[
\frac{d}{dr}
\left(
J_{\mathcal R}(r)|\varphi_L'(r)|^{p-2}\varphi_L'(r)
\right)=0.
\]
Therefore there is a constant $C_L^{(0)}$ such that
\[
J_{\mathcal R}(r)|\varphi_L'(r)|^{p-2}\varphi_L'(r)=C_L^{(0)}.
\]
Because the boundary values are $1$ at $r=\rho$ and $0$ at $r=L$,
$\varphi_L$ is decreasing and hence $\varphi_L'<0$. We may therefore
write
\[
-\varphi_L'(r)
=C_LJ_{\mathcal R}(r)^{-1/(p-1)},
\]
where $C_L>0$. Integrating from $\rho$ to $r$ gives
\[
\varphi_L(r)
=1-C_L\int_\rho^r
J_{\mathcal R}(s)^{-1/(p-1)}\,ds.
\]
The condition $\varphi_L(L)=0$ determines $C_L$ uniquely:
\[
C_L
=\left(
\int_\rho^L
J_{\mathcal R}(s)^{-1/(p-1)}\,ds
\right)^{-1}.
\]
It is useful to introduce the notation
\[
I_{\mathcal R}(L)
:=\int_\rho^L
J_{\mathcal R}(s)^{-1/(p-1)}\,ds.
\]
Then
\[
C_L=I_{\mathcal R}(L)^{-1},
\qquad
\varphi_L(r)=1-\frac{I_{\mathcal R}(r)}{I_{\mathcal R}(L)},
\]
where
\[
I_{\mathcal R}(r):=
\int_\rho^rJ_{\mathcal R}(s)^{-1/(p-1)}\,ds.
\]
Thus the complete radial state is explicitly known.

\medskip
\noindent
\textbf{Step 4: the Bernoulli flux $q_{\mathcal R}(L)$.}

On the outer geodesic sphere $\partial\Omega_L=\{r=L\}$, the outward
unit normal is
\[
\nu_g=\partial_r.
\]
The Bernoulli quantity associated with $\Omega_L$ is therefore
\[
q_{\mathcal R}(L)
:=-\partial_{\nu_g}u_L\big|_{\partial\Omega_L}
=-\varphi_L'(L).
\]
Using the formula above,
\[
q_{\mathcal R}(L)
=
\frac{J_{\mathcal R}(L)^{-1/(p-1)}}
{I_{\mathcal R}(L)}
=
\frac{J_{\mathcal R}(L)^{-1/(p-1)}}
{\displaystyle\int_\rho^L
J_{\mathcal R}(s)^{-1/(p-1)}\,ds}.
\label{eq:spherical-reference-flux}
\]
This gives the precise meaning of the notation $q_{\mathcal R}(L)$: it is
\emph{the normal boundary flux of the radial $p$-harmonic state whose
outer boundary has geodesic radius $L$}.

Moreover, $q_{\mathcal R}$ is strictly decreasing. Indeed, setting
$N(L)=J_{\mathcal R}(L)^{-1/(p-1)}$, we have
$q_{\mathcal R}=N/I_{\mathcal R}$ and
\[
N'(L)
=-\frac1{p-1}J_{\mathcal R}'(L)
J_{\mathcal R}(L)^{-p/(p-1)}<0.
\]
Since $I_{\mathcal R}'(L)=N(L)$,
\[
q_{\mathcal R}'(L)
=\frac{N'(L)I_{\mathcal R}(L)-N(L)^2}
{I_{\mathcal R}(L)^2}<0.
\]
Hence
\[
q_{\mathcal R}'(L)<0
\quad\text{for }L\in(\rho,\pi\mathcal R/2).
\]
In particular, if $\rho<L'<L<\pi\mathcal R/2$, then
\[
q_{\mathcal R}(L')>q_{\mathcal R}(L).
\]
This is exactly the expected monotonicity: as the outer domain
contracted, the Bernoulli flux increases.

\medskip
\noindent
\textbf{Step 5: solving the Bernoulli condition in the reference family.}

Suppose that the prescribed Bernoulli constant is $c>0$. In the spherical
family, the free-boundary equation
\[
-\partial_{\nu_g}u_L=c
\quad\text{on }\partial\Omega_L
\]
reduces to the scalar equation
\[
q_{\mathcal R}(L)=c.
\]
Because $q_{\mathcal R}$ is strictly decreasing, there is at most one
radius $L$ solving this equation. Whenever $c$ belongs to the range of
$q_{\mathcal R}$ on the chosen interval, the corresponding radius is
therefore unique. This gives a completely explicit realization of the
reference-family mechanism in~{\rm (RA)}.

\medskip
\noindent
\textbf{Step 6: definition of the contracted outer radius $L'$.}

Let $L$ be the outer radius of the reference domain $\Omega_L$ introduced above.
We now choose a smaller outer radius $L'$ satisfying
\[
\rho<L'<L<\frac{\pi\mathcal R}{2}.
\]
The corresponding geodesic ball
\[
\Omega_{L'}=B_g(x_0,L')
\]
is the contracted domain, while the inner obstacle
\[
K=\overline{B_g(x_0,\rho)}
\]
remains unchanged. Hence
\[
K\subset\Omega_{L'}\subset\Omega_L.
\]
We use the following notation throughout the spherical construction:
\begin{center}
\begin{tabular}{c|l}
$L$ & original outer geodesic radius,\\
$L'$ & contracted outer geodesic radius,\\
$\Omega_L$ & original geodesic ball,\\
$\Omega_{L'}$ & contracted geodesic ball,\\
$G_{\mathcal R}$ & radial $p$-harmonic coordinate,\\
$A_{L,L'}$ & radial transport factor,\\
$f$ & radial transport function,\\
$\Psi_{L,L'}$ & radial contraction map,\\
$q_{\mathcal R}(L)$ & Bernoulli flux for $\Omega_L$,\\
$Q_{L,L'}$ & transported boundary flux,\\
$L'_{\varepsilon}$ & contracted radius in the limiting family,\\
$\Psi_{\varepsilon}$ & $\varepsilon$-dependent spherical contraction,\\
$Q_{\varepsilon}$ & flux associated with $\Psi_{\varepsilon}$,\\
$\kappa_{L,L'}$ & exact spherical flux amplification factor.
\end{tabular}
\end{center}
Thus
\[
\Omega_{L'}\subset\Omega_L,
\qquad
\partial\Omega_L=\{r=L\},
\qquad
\partial\Omega_{L'}=\{r=L'\}.
\]
The word ``contraction'' refers here to the inclusion of the domains.
It does not mean that the derivative of the radial coordinate map must be
smaller than one at every point.

\medskip
\noindent
\textbf{Step 7: construction of the radial transport map.}

We seek a diffeomorphism
\[
\Psi_{L,L'}:\Omega_L\setminus K
\longrightarrow\Omega_{L'}\setminus K
\]
of the form
\[
\Psi_{L,L'}(r,\theta)=(f(r),\theta),
\]
with
\[
f(\rho)=\rho,
\qquad
f(L)=L',
\qquad
f'(r)>0.
\]
The inner boundary is therefore fixed and the outer boundary is transported
from radius $L$ to radius $L'$.

Let
\[
U_{L,L'}:=u_L\circ\Psi_{L,L'}^{-1}
\]
be the transported state. If $s=f(r)$, then
\[
U_{L,L'}(s,\theta)=u_L(r,\theta)=\varphi_L(r)
\]
and hence
\[
U_{L,L'}'(s)=\frac{\varphi_L'(r)}{f'(r)}.
\]
We now determine $f$ so that $U_{L,L'}$ is again $p$-harmonic with
respect to the same spherical metric $g_{\mathcal R}$.

\medskip
\noindent
\textbf{Step 8: the differential equation for $f$.}

Since $u_L$ satisfies
\[
J_{\mathcal R}(r)|\varphi_L'(r)|^{p-2}\varphi_L'(r)
=C_L^{(0)},
\]
we obtain
\[
J_{\mathcal R}(f(r))
|U_{L,L'}'(f(r))|^{p-2}U_{L,L'}'(f(r))
=
C_L^{(0)}
\frac{J_{\mathcal R}(f(r))}
{J_{\mathcal R}(r)[f'(r)]^{p-1}}.
\]
Therefore the transported radial flux is constant if and only if there
is a constant $A>0$ such that
\[
\frac{J_{\mathcal R}(f(r))}
{J_{\mathcal R}(r)[f'(r)]^{p-1}}
=A^{-(p-1)}.
\label{eq:spherical-radial-compatibility}
\]
Equivalently,
\[
f'(r)
=A\left(
\frac{J_{\mathcal R}(f(r))}
{J_{\mathcal R}(r)}
\right)^{1/(p-1)}.
\]

Define
\[
G_{\mathcal R}(r)
:=\int_\rho^rJ_{\mathcal R}(s)^{-1/(p-1)}\,ds.
\]
Then
\[
G_{\mathcal R}'(r)=J_{\mathcal R}(r)^{-1/(p-1)}>0,
\]
so $G_{\mathcal R}$ is strictly increasing and admits a $C^1$ inverse
on the interval under consideration. Dividing the differential equation
for $f$ by $J_{\mathcal R}(f(r))^{1/(p-1)}$ gives
\[
\frac{f'(r)}{J_{\mathcal R}(f(r))^{1/(p-1)}}
=
A\,J_{\mathcal R}(r)^{-1/(p-1)}.
\]
Thus
\[
\frac{d}{dr}G_{\mathcal R}(f(r))
=A\frac{d}{dr}G_{\mathcal R}(r),
\]
and therefore
\[
G_{\mathcal R}(f(r))=AG_{\mathcal R}(r)+B.
\]
The condition $f(\rho)=\rho$ gives $B=0$. The condition $f(L)=L'$ gives
\[
A=A_{L,L'}
:=\frac{G_{\mathcal R}(L')}{G_{\mathcal R}(L)}.
\]
Since $L'<L$ and $G_{\mathcal R}$ is strictly increasing,
\[
0<A_{L,L'}<1.
\]
We have therefore obtained the explicit formula
\[
G_{\mathcal R}(f(r))
=\frac{G_{\mathcal R}(L')}{G_{\mathcal R}(L)}
G_{\mathcal R}(r).
\label{eq:spherical-conformal-radial-map}
\]
Equivalently,
\[
 f(r)=G_{\mathcal R}^{-1}
 \left(
 \frac{G_{\mathcal R}(L')}{G_{\mathcal R}(L)}
 G_{\mathcal R}(r)
 \right).
\]
This formula defines uniquely the required increasing radial transport.
Differentiating it gives
\[
 f'(r)=A_{L,L'}
 \left(
 \frac{J_{\mathcal R}(f(r))}{J_{\mathcal R}(r)}
 \right)^{1/(p-1)}.
\label{eq:spherical-f-derivative}
\]

\medskip
\noindent
\textbf{Step 9: verification that the transported state is $p$-harmonic.}

Substituting the last formula into
\eqref{eq:spherical-radial-compatibility}, we find
\[
\frac{J_{\mathcal R}(f(r))}
{J_{\mathcal R}(r)[f'(r)]^{p-1}}
=A_{L,L'}^{-(p-1)},
\]
which is independent of $r$. Hence
\[
\frac{d}{ds}
\left(
J_{\mathcal R}(s)
|U_{L,L'}'(s)|^{p-2}U_{L,L'}'(s)
\right)=0.
\]
Therefore
\[
-\Delta_{p,g_{\mathcal R}}U_{L,L'}=0
\quad\text{in }\Omega_{L'}\setminus K.
\]
The boundary values are also preserved:
\[
U_{L,L'}=1\quad\text{on }\partial K,
\qquad
U_{L,L'}=0\quad\text{on }\partial\Omega_{L'}.
\]
Thus $U_{L,L'}$ solves exactly the same Dirichlet problem as the radial
state $u_{L'}$ on the smaller annulus. By uniqueness of the Dirichlet
problem,
\[
U_{L,L'}=u_{L'}.
\]
This identity is important: the radial transport has not produced an
arbitrary $p$-harmonic function; it has transported $u_L$ exactly onto the
canonical radial state corresponding to the contracted radius $L'$.

\medskip
\noindent
\textbf{Step 10: the transported boundary flux $Q_{L,L'}$.}

We now define the quantity $Q_{L,L'}$ appearing in the comparison.
It is the geometric normal derivative of the transported state at the new
outer boundary:
\[
Q_{L,L'}
:=-\partial_{\nu_g}U_{L,L'}
\big|_{\partial\Omega_{L'}}.
\]
Since $\nu_g=\partial_r$ on $\partial\Omega_{L'}$, this is simply
\[
Q_{L,L'}=-U_{L,L'}'(L').
\]
Because $L'=f(L)$,
\[
U_{L,L'}'(L')
=\frac{\varphi_L'(L)}{f'(L)},
\]
and consequently
\[
Q_{L,L'}
=\frac{q_{\mathcal R}(L)}{f'(L)}.
\label{eq:Q-over-fprime}
\]
Using the explicit expression for $f'(L)$,
\[
Q_{L,L'}
=
\frac{q_{\mathcal R}(L)}
{A_{L,L'}
\left(J_{\mathcal R}(L')/J_{\mathcal R}(L)\right)^{1/(p-1)}}.
\label{eq:exact-boundary-flux-transport}
\]

There is, however, an even more informative identity. Since we proved in
Step 9 that $U_{L,L'}=u_{L'}$, its boundary flux must equal the reference
flux at radius $L'$. Hence
\[
Q_{L,L'}=q_{\mathcal R}(L').
\label{eq:Q-equals-qLprime}
\]
Combining this with~\eqref{eq:Q-over-fprime} gives the exact identity
\[
\frac{q_{\mathcal R}(L')}{q_{\mathcal R}(L)}
=\frac1{f'(L)}.
\label{eq:exact-amplification-ratio}
\]
Since $L'<L$ and $q_{\mathcal R}$ is strictly decreasing,
\[
q_{\mathcal R}(L')>q_{\mathcal R}(L),
\]
and therefore
\[
Q_{L,L'}>q_{\mathcal R}(L).
\]
Thus the spherical contraction gives an \emph{exact strict flux
amplification}. If we define
\[
\kappa_{L,L'}
:=\frac{Q_{L,L'}}{q_{\mathcal R}(L)}
=\frac{q_{\mathcal R}(L')}{q_{\mathcal R}(L)}>1,
\]
then
\[
Q_{L,L'}=\kappa_{L,L'}q_{\mathcal R}(L).
\]
This is the spherical analogue of the amplification factor
$\kappa_\Psi>1$ appearing in~{\rm (RC)}.

\medskip
\noindent
\textbf{Step 12: disappearance of the contraction and flux stability.}

Finally, let
\[
L'_\varepsilon<L, \  0< \varepsilon << 1 \, \,  \mbox{such  that}\, \,
\qquad
L'_\varepsilon\longrightarrow L, \, \,  \mbox{when}\, \,  \varepsilon \rightarrow 0,
\]
and let
\[
\Psi_\varepsilon:=\Psi_{L,L'_\varepsilon}:\Omega_L
\longrightarrow\Omega_{L'_\varepsilon}
\]
denote the corresponding one-parameter family of diffeomorphic
contractions. We use the parameter $\varepsilon$ here deliberately:
$\Psi_\varepsilon$ is not one of the discrete maps $\Psi_j$ of the
successive approximation scheme. Let
\[
f_\varepsilon:=f_{L,L'_\varepsilon},
\qquad
Q_\varepsilon:=Q_{L,L'_\varepsilon}
=-\partial_{\nu_g}
\left(u_L\circ\Psi_\varepsilon^{-1}\right)
\big|_{\partial\Omega_{L'_\varepsilon}}.
\]
Thus $Q_\varepsilon$ denotes the flux associated with the
$\varepsilon$-dependent spherical contraction, whereas $Q_j$ denotes
the flux associated with the $j$-th contraction in the abstract
successive scheme.
Since $G_{\mathcal R}$ and $J_{\mathcal R}$ are continuous, when $\varepsilon \rightarrow 0,$
\[
A_\varepsilon
:=\frac{G_{\mathcal R}(L'_\varepsilon)}
{G_{\mathcal R}(L)}
\longrightarrow1,
\qquad
\frac{J_{\mathcal R}(L'_\varepsilon)}
{J_{\mathcal R}(L)}\longrightarrow1.
\]
Hence
\[
f_\varepsilon'(L)
=A_\varepsilon
\left(
\frac{J_{\mathcal R}(L'_\varepsilon)}
{J_{\mathcal R}(L)}
\right)^{1/(p-1)}
\longrightarrow1.
\]
Therefore
\[
Q_\varepsilon
=\frac{q_{\mathcal R}(L)}{f_\varepsilon'(L)}
\longrightarrow q_{\mathcal R}(L).
\]
Equivalently, because $Q_\varepsilon=q_{\mathcal R}(L'_\varepsilon)$,
this convergence is also an immediate consequence of the continuity of
$q_{\mathcal R}$:
\[
Q_\varepsilon=q_{\mathcal R}(L'_\varepsilon)
\longrightarrow q_{\mathcal R}(L).
\]
Moreover, the error is exactly
\[
|Q_\varepsilon-q_{\mathcal R}(L)|
=q_{\mathcal R}(L)
\left|\frac1{f_\varepsilon'(L)}-1\right|.
\label{eq:spherical-flux-error}
\]
Thus the strict amplification disappears continuously as the contraction
$L'_\varepsilon<L$ tends to the identity $L'=L$.

\medskip
\noindent
\textbf{Conclusion of the example.}

The spherical family gives a fully explicit realization of the abstract framework.

In particular,
\[
Q_{L,L'}=q_{\mathcal R}(L')
>q_{\mathcal R}(L),
\]
while
\[
L'_\varepsilon\uparrow L
\quad\Longrightarrow\quad
Q_\varepsilon\downarrow q_{\mathcal R}(L).
\]
The example therefore gives an explicit curved-space model in which the
monotonicity of the Bernoulli constant, the radial contraction, the exact
transport of the $p$-harmonic state, the strict flux amplification, and the
limiting stability of the flux can all be obtained from direct calculations.

\begin{remark}
The hypotheses
\[
c_j>c,\qquad q(R_N)>c
\]
fix the side from which the two monotone constructions approach the
target level. The convergence
\[
c_j-c_{j+1}\to0
\]
is a consequence of the convergence $c_j\downarrow\ell$, and
\[
k_N\to\infty
\]
follows from $c_j>c$ for every finite $j$ together with
$q(R_N)\to c$. Thus these two facts are not additional assumptions in
the stopping theorem.
\end{remark}

\begin{theorem}[Monotonicity of the Bernoulli constant]
\label{thm:monotonicity-c-riemannian}
Under the hypotheses of Proposition~\ref{prop:riemannian-contact-comparison},
if
\[
K\subset\Omega_1\subset\Omega_2,
\]
then
\[
c_{\Omega_1}>c_{\Omega_2}.
\]
Thus, along a decreasing sequence of admissible domains
\[
\Omega_{j+1}\subsetneq\Omega_j,
\]
the associated Bernoulli constants satisfy
\[
c_{j+1}<c_j.
\]
\end{theorem}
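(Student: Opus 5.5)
The theorem follows almost directly from Proposition~\ref{prop:riemannian-contact-comparison}. I would organize the argument in three steps: a reduction to the strict case together with a check of the hypotheses, the application of the comparison proposition, and the specialization to the successive scheme.

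\textbf{Step 1: reduction and hypotheses.} The statement allows $\Omega_1\subset\Omega_2$, but (RC) is formulated only for strict inclusion $\Omega_1\subsetneq\Omega_2$. If $\Omega_1=\Omega_2$, then $c_{\Omega_1}=c_{\Omega_2}$ and the strict inequality fails, so the theorem must be read with strict inclusion. I would state this explicitly and then assume $\Omega_1\subsetneq\Omega_2$. Both domains are taken to be stationary, that is, sufficiently regular optimizers. By \eqref{eq:bernoulli-optimality} their states then have constant normal fluxes $c_{\Omega_1}$ and $c_{\Omega_2}$ on $\partial\Omega_1$ and $\partial\Omega_2$.

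\textbf{Step 2: the comparison.} Invoke (RC) to obtain $\Psi$, $\Omega_\Psi\subset\Omega_1$, the contact point $x_0$, the point $y_0=\Psi^{-1}(x_0)\in\partial\Omega_2$ and the constant $\kappa_\Psi>1$. Proposition~\ref{prop:riemannian-contact-comparison} then yields $c_{\Omega_1}>\kappa_\Psi c_{\Omega_2}>c_{\Omega_2}$, where the last inequality uses $c_{\Omega_2}>0$ and $\kappa_\Psi>1$.

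The one delicate point is the Hopf step inside that proposition, which needs $v\not\equiv u_1$. I would settle this as follows. If $v\equiv u_1$ on $\Omega_\Psi\setminus K$, then $-\partial_{\nu_g}v(x_0)=c_{\Omega_1}$, and the flux part of (RC) gives $c_{\Omega_1}\ge\kappa_\Psi c_{\Omega_2}>c_{\Omega_2}$. So the conclusion holds in either case, and the Hopf lemma can be bypassed when $v\equiv u_1$. I expect this degenerate case to be the main obstacle. The other point to check is that $y_0\in\partial\Omega_2$, so that the stationarity of $\Omega_2$ applies there. This follows because $\Psi$ is a diffeomorphism of closures that fixes $\partial K$, and therefore maps $\partial\Omega_2$ onto $\partial\Omega_\Psi$. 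Equivalently, one can pass to multipliers: $\lambda_{\Omega_1}<\lambda_{\Omega_2}$ by Theorem~\ref{thm:monotonicity-lambda}, and the map $c\mapsto-\frac{p-1}{p}c^p$ is strictly decreasing.

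\textbf{Step 3: the successive scheme.} For the sequential statement, apply the first part to the pair $(\Omega_{j+1},\Omega_j)$ with $\Omega_{j+1}\subsetneq\Omega_j$. Both are stationary optimizers satisfying \eqref{eq:recursive-bernoulli}, and (RC) is assumed for each such pair. This gives $c_{j+1}<c_j$, which is exactly Proposition~\ref{prop:successive-c-monotonicity}, so I would simply cite it.
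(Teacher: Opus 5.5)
Your proposal is correct and is essentially the paper's argument. The paper's proof re-derives inline the chain $c_{\Omega_1}>-\partial_{\nu_g}v(x_0)\geq\kappa_\Psi c_{\Omega_2}>c_{\Omega_2}$ (weak comparison via $(v-u_1)^+$, Hopf at the contact point, flux amplification from (RC)), which you obtain by citing Proposition~\ref{prop:riemannian-contact-comparison}, and then applies it to consecutive domains as you do. Your extra points are also correct and tighten the argument: strict inclusion is needed, $y_0\in\partial\Omega_2$, and the case $v\equiv u_1$, which the paper simply excludes, still gives the result. In fact the non-strict inequality $-\partial_{\nu_g}u_1(x_0)\geq-\partial_{\nu_g}v(x_0)$, which follows from $v\leq u_1$ and $v(x_0)=u_1(x_0)=0$ alone, already suffices, because strictness comes from $\kappa_\Psi>1$ and $c_{\Omega_2}>0$.
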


\begin{proof}
Set
\[
u_1:=u_{\Omega_1},
\qquad
u_2:=u_{\Omega_2}.
\]
By the Riemannian contraction hypothesis {\rm (RC)}, there exist an
admissible domain $\Omega_\Psi\subset\Omega_1$, a $C^2$ diffeomorphism
\[
\Psi:
\overline{\Omega_2\setminus K}
\longrightarrow
\overline{\Omega_\Psi\setminus K},
\]
and a contact point
\[
x_0\in\partial\Omega_\Psi\cap\partial\Omega_1
\]
such that
\[
\nu_{\Omega_\Psi}(x_0)=\nu_{\Omega_1}(x_0).
\]
Define the transported state
\[
v:=u_2\circ\Psi^{-1}
\qquad\text{in }\Omega_\Psi\setminus K.
\]
The hypothesis {\rm (RC)} asserts that $v$ is a weak subsolution of the
Riemannian $p$-Laplace equation and that
\[
v=1\quad\text{on }\partial K,
\qquad
v=0\quad\text{on }\partial\Omega_\Psi.
\]

\medskip
\noindent\textbf{Step 1: comparison of $v$ with $u_1$.}
Since $\Omega_\Psi\subset\Omega_1$, the function $u_1$ is
$p$-harmonic in $\Omega_\Psi\setminus K$ and satisfies
\[
u_1=1\quad\text{on }\partial K,
\qquad
u_1=0\quad\text{on }\partial\Omega_1.
\]
In particular,
\[
u_1\geq0\quad\text{on }\partial\Omega_\Psi,
\]
because $\partial\Omega_\Psi$ lies inside $\Omega_1$ except at the
contact point $x_0$, where $u_1(x_0)=0$. Hence
\[
(v-u_1)^+=0
\quad\text{on }\partial(\Omega_\Psi\setminus K).
\]
Thus
\[
w:=(v-u_1)^+\in W_0^{1,p}(\Omega_\Psi\setminus K).
\]

Using $w$ as an admissible test function in the weak subsolution
inequality for $v$ and in the weak equation for $u_1$, and subtracting
the two relations, gives
\[
\int_{\{v>u_1\}}
\left\langle
|\nabla_gv|^{p-2}\nabla_gv
-
|\nabla_gu_1|^{p-2}\nabla_gu_1,
\nabla_gv-\nabla_gu_1
\right\rangle_g\,dV_g
\leq0.
\]
The map
\[
\xi\longmapsto|\xi|_g^{p-2}\xi
\]
is strictly monotone for $1<p<\infty$. Therefore the integrand is
nonnegative and vanishes only when
\[
\nabla_gv=\nabla_gu_1.
\]
It follows that
\[
\nabla_g(v-u_1)^+=0
\quad\text{a.e. in }\Omega_\Psi\setminus K.
\]
Since $(v-u_1)^+$ has zero trace, we conclude
\[
v\leq u_1
\qquad\text{in }\Omega_\Psi\setminus K.
\]

\medskip
\noindent\textbf{Step 2: strict boundary comparison at the contact point.}
At the contact point $x_0$,
\[
v(x_0)=0=u_1(x_0).
\]
If $v\not\equiv u_1$, the strong comparison principle gives
\[
v<u_1
\qquad\text{in the interior of }\Omega_\Psi\setminus K.
\]
Both functions vanish at $x_0$ on the common tangent boundary, and
{\rm (RC)} gives
\[
\nu_{\Omega_\Psi}(x_0)=\nu_{\Omega_1}(x_0).
\]
The boundary point lemma of Hopf, applied with this common outward
normal, therefore yields
\[
-\partial_{\nu_g}u_1(x_0)
>
-\partial_{\nu_g}v(x_0).
\]
Since $\Omega_1$ is an optimal domain, its overdetermined boundary
condition is
\[
-\partial_{\nu_g}u_1=c_{\Omega_1}
\qquad\text{on }\partial\Omega_1.
\]
Consequently,
\[
c_{\Omega_1}>
-\partial_{\nu_g}v(x_0).
\]

\medskip
\noindent\textbf{Step 3: use of the flux amplification in {\rm (RC)}.}
Let
\[
y_0:=\Psi^{-1}(x_0).
\]
Because $\Omega_2$ is also an optimal domain,
\[
-\partial_{\nu_g}u_2(y_0)=c_{\Omega_2}.
\]
The flux part of {\rm (RC)} gives a constant $\kappa_\Psi>1$ such that
\[
-\partial_{\nu_g}v(x_0)
\geq
\kappa_\Psi
\left(
-\partial_{\nu_g}u_2(y_0)
\right)
=
\kappa_\Psi c_{\Omega_2}.
\]
Combining this inequality with the strict Hopf comparison obtained in
Step 2 gives the key chain
\[
c_{\Omega_1}
>
-\partial_{\nu_g}v(x_0)
\geq
\kappa_\Psi c_{\Omega_2}
>
c_{\Omega_2}.
\]
In particular,
\[
c_{\Omega_1}>c_{\Omega_2}.
\]

\medskip
\noindent\textbf{Step 4: return to the Lagrange multipliers.}
For every optimal domain,
\[
\lambda_\Omega
=
-\frac{p-1}{p}c_\Omega^p.
\]
The function
\[
c\longmapsto-\frac{p-1}{p}c^p
\]
is strictly decreasing on $(0,\infty)$. Hence
\[
c_{\Omega_1}>c_{\Omega_2}
\quad\Longrightarrow\quad
\lambda_{\Omega_1}<\lambda_{\Omega_2}.
\]
Thus the two formulations are equivalent:
\[
c_{\Omega_1}>c_{\Omega_2}
\qquad\Longleftrightarrow\qquad
\lambda_{\Omega_1}<\lambda_{\Omega_2}.
\]

Finally, applying the result to two consecutive domains of the
successive approximation,
\[
\Omega_{j+1}\subsetneq\Omega_j,
\]
gives
\[
c_{j+1}<c_j,
\qquad
\lambda_{j+1}>\lambda_j.
\]
This is precisely the monotonicity needed in the subsequent stopping
argument.
\end{proof}

\subsection{Identification of the Limiting Bernoulli Level}
\label{sec:identification-limiting-level}

By Theorem~\ref{thm:monotonicity-c-riemannian}, the successive
Bernoulli constants associated with the decreasing domains satisfy
\begin{equation}
c_0>c_1>c_2>\cdots.
\label{eq:decreasing-bernoulliconstants}
\end{equation}

For the successive construction we assume that every finite level
remains strictly above the prescribed level:
\begin{equation}
c_j>c
\qquad\text{for every finite }j.
\label{eq:levels-above-c}
\end{equation}
Consequently, the monotone sequence admits a limit
\begin{equation}
c_j\downarrow\ell,
\qquad
\ell\geq c.
\label{eq:level-limit}
\end{equation}

The first-crossing argument must be formulated with reference fluxes
approaching $c$ from the same side. We therefore choose a sequence of
reference radii $R_N$ such that
\begin{equation}
q(R_N)>c,
\qquad
q(R_N)\longrightarrow c.
\label{eq:qRN-from-above}
\end{equation}
For each $N$, define
\begin{equation}
k_N
=
\min\{j\geq0:\ c_j\leq q(R_N)\}.
\label{eq:riemannian-stopping-index-final}
\end{equation}
Whenever such an index exists, its minimality gives
\begin{equation}
c_{k_N}\leq q(R_N)<c_{k_N-1}.
\label{eq:first-crossing-bracketing}
\end{equation}

If $q(R_N)$ lies strictly between two consecutive levels, the
inequalities are strict. The weak form above is the one that remains
valid in the presence of equality.

The first-crossing argument requires in addition that
\begin{equation}
c_j-c_{j+1}\longrightarrow0.
\label{eq:successive-level-gap-final}
\end{equation}
Then \eqref{eq:first-crossing-bracketing} implies
\begin{equation}
0\leq q(R_N)-c_{k_N}
\leq
c_{k_N-1}-c_{k_N}
\longrightarrow0,
\label{eq:stopping-gap}
\end{equation}
provided
\[
k_N\longrightarrow\infty.
\]
Hence
\begin{equation}
q(R_N)-c_{k_N}\longrightarrow0.
\label{eq:qRN-cK}
\end{equation}
Since $q(R_N)\to c$, we obtain
\begin{equation}
c_{k_N}\longrightarrow c.
\label{eq:stopping-level-to-c}
\end{equation}
But $(c_{k_N})$ is a subsequence of the decreasing sequence
$(c_j)$, so
\[
c_{k_N}\longrightarrow\ell.
\]
Therefore
\begin{equation}
\ell=c.
\label{eq:ell-equals-c}
\end{equation}

The limiting boundary condition then follows from the boundary-flux
stability hypothesis (SC):
\begin{equation}
-\partial_{\nu_g}u_{\Omega^\ast}
=
c
\qquad\text{on }\partial\Omega^\ast.
\label{eq:exact-riemannian-bernoulli-level}
\end{equation}

\begin{theorem}[Riemannian stopping criterion]
\label{thm:riemannian-stopping}
Assume the compactness, state-stability and boundary-flux stability
hypotheses, together with (RC), (SC), and (RA). Suppose that the
successive construction produces
\[
c_0>c_1>c_2>\cdots>c
\]
and
\[
c_j\downarrow\ell.
\]
Let $(R_N)$ satisfy
\[
q(R_N)>c,
\qquad
q(R_N)\longrightarrow c.
\]
Assume, in addition, that for every sufficiently large $N$ the
first-crossing index
\[
k_N=\min\{j\geq0:c_j\leq q(R_N)\}
\]
exists. Then
\[
q(R_N)-c_{k_N}\longrightarrow0,
\qquad
c_{k_N}\longrightarrow c,
\]
and consequently
\[
\ell=c.
\]
The limiting pair $(\Omega^\ast,u_{\Omega^\ast})$ therefore satisfies
\[
\begin{cases}
-\Delta_{p,g}u_{\Omega^\ast}=0
&\text{in }\Omega^\ast\setminus K,\\
u_{\Omega^\ast}=1
&\text{on }\partial K,\\
u_{\Omega^\ast}=0
&\text{on }\partial\Omega^\ast,\\
-\displaystyle\frac{\partial u_{\Omega^\ast}}{\partial\nu_g}=c
&\text{on }\partial\Omega^\ast.
\end{cases}
\]
\end{theorem}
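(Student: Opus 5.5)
The plan is to prove the theorem in three stages. First we show $k_N\to\infty$. Then we use the first-crossing bracketing to identify $\ell$ with $c$. Finally we transfer the Bernoulli condition to the limit domain through Proposition~\ref{prop:limit-problem}.

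\textbf{Stage 1: $k_N\to\infty$.} Fix any finite $J$. Since $c_0>c_1>\dots>c_J>c$ and $q(R_N)\to c$, there is $N_J$ such that $q(R_N)<c_J$ for all $N\ge N_J$. For such $N$, no index $j\le J$ satisfies $c_j\le q(R_N)$, because $c_j\ge c_J>q(R_N)$. Hence $k_N>J$ whenever $k_N$ exists, which by hypothesis holds for $N$ large. In particular $k_N\ge1$, so $c_{k_N-1}$ is defined, and the minimality of $k_N$ gives the bracketing \eqref{eq:first-crossing-bracketing}.

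\textbf{Stage 2: $\ell=c$.} Since $(c_j)$ is decreasing and bounded below by $c$, it converges to $\ell\ge c$, and therefore $c_j-c_{j+1}\to0$. Combined with $k_N\to\infty$ and \eqref{eq:first-crossing-bracketing}, this gives
\[
0\le q(R_N)-c_{k_N}\le c_{k_N-1}-c_{k_N}\to0 .
\]
Thus $c_{k_N}\to c$ because $q(R_N)\to c$. On the other hand, $(c_{k_N})$ is a subsequence of $(c_j)$ along indices tending to infinity, so $c_{k_N}\to\ell$, and uniqueness of limits yields $\ell=c$.

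This second stage also has a shortcut: $c<c_{k_N}\le q(R_N)\to c$ already squeezes $c_{k_N}\to c$ without the gap estimate. I would record both routes.

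\textbf{Stage 3: the limit problem.} The nested sequence $\Omega_j$ has, by the compactness hypothesis (Proposition~\ref{prop:hausdorff-compactness}), a Hausdorff-convergent subsequence with limit $\Omega^\ast\in\Og$. Along this subsequence $c_j\downarrow\ell=c$ still holds. Proposition~\ref{prop:limit-problem}, which uses state stability and (SC), then gives that $u^\ast=u_{\Omega^\ast}$ solves the Dirichlet problem together with $-\partial_{\nu_g}u^\ast=\ell=c$ on $\partial\Omega^\ast$.

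\textbf{Main obstacle.} I expect the delicate point to be Stage 1, namely guaranteeing that $k_N$ exists and is at least $1$. This is exactly why existence of $k_N$ is assumed for large $N$. I also need to check that the subsequence extraction in Stage 3 is compatible with the limit $\ell$, which holds because every subsequence of a convergent sequence has the same limit. (RC) enters only through the strict monotonicity of the $c_j$ (Theorem~\ref{thm:monotonicity-c-riemannian}), which is already built into the hypotheses. (RA) is used only to supply the radii $R_N$.
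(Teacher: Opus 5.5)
Your proposal is correct and follows the paper's own argument. You get $k_N\to\infty$ from $c_j>c$ and $q(R_N)\to c$; the first-crossing bracketing $c_{k_N}\le q(R_N)<c_{k_N-1}$ together with $c_j-c_{j+1}\to0$ gives $c_{k_N}\to c$; this limit is identified with $\ell$ because $k_N\to\infty$; and (SC), via Proposition~\ref{prop:limit-problem}, then yields the boundary condition. Your explicit check that $k_N\ge1$ and your squeeze $c<c_{k_N}\le q(R_N)\to c$ are valid small refinements rather than a different route; the squeeze in fact makes the gap estimate unnecessary.
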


\begin{remark}
The existence of the first-crossing index is an explicit hypothesis of
this stopping theorem. It is not a consequence of monotonicity alone:
if the limit $\ell$ were strictly larger than $c$, then
$q(R_N)\to c$ could eventually remain below every $c_j$. Thus, in a
fully unconditional existence theorem, the first-crossing property
must be established from the geometry of the chosen reference family
and the successive construction.
\end{remark}

\begin{theorem}[Sufficient condition for a classical Riemannian Bernoulli solution]
\label{thm:final-riemannian-existence}
Let $(M,g)$ be a smooth Riemannian manifold and let $K\subset M$ be a
compact domain with sufficiently regular boundary. Assume that the
admissible class satisfies the compactness and regularity hypotheses
used in the existence theorem for the shape optimization problem.
Assume moreover that:

\begin{enumerate}[label={\rm (\roman*)}]
\item the optimal domains satisfy the Riemannian contraction
      hypothesis {\rm (RC)} and the associated monotonicity result;

\item the successive construction admits a nested sequence
      \[
      \Omega_0\supsetneq\Omega_1\supsetneq\cdots
      \]
      with
      \[
      c_0>c_1>\cdots>c;
      \]

\item the boundary and state stability condition {\rm (SC)} holds;

\item the reference family satisfies {\rm (RA)}, and there exists a
      sequence $R_N$ such that
      \[
      q(R_N)>c,\qquad q(R_N)\to c,
      \]
      for which the first-crossing indices are defined;

\item the limiting boundary $\partial\Omega^\ast$ is sufficiently
      regular for the classical normal derivative to be defined.
\end{enumerate}

Then the successive construction has a limit $\Omega^\ast$ and a
state $u^\ast$ satisfying
\[
\begin{cases}
-\Delta_{p,g}u^\ast=0
&\text{in }\Omega^\ast\setminus K,\\
u^\ast=1
&\text{on }\partial K,\\
u^\ast=0
&\text{on }\partial\Omega^\ast,\\
-\partial_{\nu_g}u^\ast=c
&\text{on }\partial\Omega^\ast.
\end{cases}
\]
In particular, $(\Omega^\ast,u^\ast)$ is a classical solution of the
Riemannian exterior Bernoulli problem.
\end{theorem}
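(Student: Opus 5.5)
The plan is to assemble results already established in the paper rather than prove anything new. The theorem is essentially the conjunction of Theorem~\ref{thm:optimal-domain}, Theorem~\ref{thm:monotonicity-c-riemannian}, Proposition~\ref{prop:limit-problem} and Theorem~\ref{thm:riemannian-stopping}. I would carry this out in four steps, in the following order.

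\textbf{Step 1 (the scheme is well defined and monotone).} Start from the nested sequence $\Omega_0\supsetneq\Omega_1\supsetneq\cdots$ given by (ii). By the compactness and regularity hypotheses and Theorem~\ref{thm:optimal-domain} applied to each class $\mathcal O_{j+1}$, every $\Omega_j$ is an optimal domain. Since its boundary is regular, the Lagrange multiplier argument gives $-\partial_{\nu_g}u_j=c_j$ on $\partial\Omega_j$. Hypothesis (i) and Theorem~\ref{thm:monotonicity-c-riemannian} then yield $c_{j+1}<c_j$, and (ii) gives $c_j>c$. The sequence $(c_j)$ is therefore decreasing and bounded below, so $c_j\downarrow\ell\ge c$.

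\textbf{Step 2 (the limit domain and limit state).} Proposition~\ref{prop:hausdorff-compactness} produces a subsequence with $\Omega_j\xrightarrow{d_H^g}\Omega^\ast\in\Og$. Because the domains are nested, the Hausdorff limit is in fact independent of the subsequence: the complements $M\setminus\Omega_j$ increase, so their closure of the union is the limit. Thus the whole sequence converges. Next, the uniform extension property (Assumption~\ref{hyp:extension}), the energy bound \eqref{eq:uniform-energy-bound} and the Minty identification give $E_{\Omega_j}u_j\rightharpoonup u^\ast=u_{\Omega^\ast}$. This is Steps 1--2 of Proposition~\ref{prop:limit-problem}.

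\textbf{Step 3 (the Bernoulli condition with constant $\ell$).} Hypothesis (v) ensures that the classical normal derivative exists on $\partial\Omega^\ast$. Hypothesis (iii), i.e. (SC), then lets me pass to the limit as in Step~3 of Proposition~\ref{prop:limit-problem}, giving $-\partial_{\nu_g}u^\ast=\ell$ on $\partial\Omega^\ast$.

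\textbf{Step 4 (identifying $\ell=c$).} Using (iv), I apply Theorem~\ref{thm:riemannian-stopping}. This step is the main obstacle, because that theorem needs $k_N\to\infty$ and $c_j-c_{j+1}\to 0$. The gap condition follows from the convergence of $(c_j)$. For $k_N\to\infty$ I would argue by contradiction. If $k_N$ stayed bounded along a subsequence, then some fixed $j$ would satisfy $c_j\le q(R_N)$ for infinitely many $N$. Letting $N\to\infty$ would give $c_j\le c$, contradicting $c_j>c$. With both facts in hand, the bracketing \eqref{eq:first-crossing-bracketing} forces $c_{k_N}\to c$. Since $(c_{k_N})$ is a subsequence of the convergent sequence $(c_j)$, this gives $\ell=c$.

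Substituting $\ell=c$ into Step 3 yields the full overdetermined system. Together with the regularity assumed in (v), this shows that $(\Omega^\ast,u^\ast)$ is a classical solution.
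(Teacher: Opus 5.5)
Your proposal is correct and follows essentially the same route as the paper's proof, which likewise assembles compactness and state stability to produce $\Omega^\ast$ and $u^\ast$, the monotonicity theorem for the decreasing constants $c_j\downarrow\ell\geq c$, the stopping criterion (Theorem~\ref{thm:riemannian-stopping}) to obtain $\ell=c$, and (SC) to pass to the limit in the normal derivative. Your additional details are sound: the whole nested sequence converges because the complements $M\setminus\Omega_j$ increase, and $k_N\to\infty$ follows by contradiction from $c_j>c$ and $q(R_N)\to c$, exactly as the paper notes in the remark preceding Theorem~\ref{thm:monotonicity-c-riemannian}.
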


\begin{proof}
The compactness and stability assumptions yield a limiting admissible
domain $\Omega^\ast$ and the corresponding state $u^\ast$. The
monotonicity theorem gives the decreasing sequence of Bernoulli
constants. The Riemannian stopping criterion,
Theorem~\ref{thm:riemannian-stopping}, gives $\ell=c$. Finally, (SC)
allows passage to the limit in the boundary normal derivative and
yields
\[
-\partial_{\nu_g}u^\ast=c
\qquad\text{on }\partial\Omega^\ast.
\]
The remaining equations and boundary values follow from the state
problem and stability.
\end{proof}

\begin{corollary}[Conditional uniqueness]
\label{cor:conditional-uniqueness-riemannian}
Assume the hypotheses of Theorem~\ref{thm:final-riemannian-existence} and
suppose that two classical solutions in the admissible class are obtained
through the same shape-optimization and contraction framework. Then their
Bernoulli constants coincide by the prescribed value $c$, and the strict
monotonicity result prevents two distinct nested admissible optimizers from
having the same level. Hence the solution is unique within this class.
\end{corollary}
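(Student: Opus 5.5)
The plan is a contradiction argument. Let $(\Omega_1,u_1)$ and $(\Omega_2,u_2)$ be two classical solutions in the admissible class, both produced by the framework of Theorem~\ref{thm:final-riemannian-existence}. First we record that the levels agree. Each pair satisfies the overdetermined condition $-\partial_{\nu_g}u_i=c$ on $\partial\Omega_i$. Each $\Omega_i$ is a stationary (optimal) domain, so $-\partial_{\nu_g}u_i=c_{\Omega_i}$ by \eqref{eq:bernoulli-optimality}. Hence $c_{\Omega_1}=c_{\Omega_2}=c$, and by \eqref{eq:c-definition} $\lambda_{\Omega_1}=\lambda_{\Omega_2}=-\frac{p-1}{p}c^p$.

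Second, we reduce to a nested configuration. The phrase ``obtained through the same shape-optimization and contraction framework'' will be read as saying that the two solutions are comparable by inclusion: both are limits of the same monotone successive scheme, or one belongs to the admissible class \eqref{eq:recursive-admissible-class} built from the other. So, after relabelling, $K\subset\Omega_1\subset\Omega_2$. If $\Omega_1=\Omega_2$, then $u_1=u_2$ by Proposition~\ref{prop:state-wellposed}, and we are done. Otherwise $\Omega_1\subsetneq\Omega_2$. Hypothesis (i) lets us apply (RC) to this pair, and Theorem~\ref{thm:monotonicity-c-riemannian} (equivalently Proposition~\ref{prop:riemannian-contact-comparison}) gives $c_{\Omega_1}>\kappa_\Psi c_{\Omega_2}>c_{\Omega_2}$. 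This contradicts $c_{\Omega_1}=c_{\Omega_2}=c$. So $\Omega_1=\Omega_2$, and uniqueness of the state gives $u_1=u_2$.

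One step should be checked inside the monotonicity proof. The Hopf comparison there assumes $v\not\equiv u_1$. If $v\equiv u_1$ on $\Omega_\Psi\setminus K$, then $-\partial_{\nu_g}v(x_0)=c_{\Omega_1}$, and (RC) gives $c_{\Omega_1}\ge\kappa_\Psi c_{\Omega_2}>c_{\Omega_2}$. This is again a strict inequality, so the contradiction survives in the degenerate case too.

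The main obstacle is the reduction to nested domains. Two arbitrary solutions in $\mathcal O_g$ need not be ordered by inclusion, and nothing proved so far gives an ordering. The honest route is to make this part of the hypothesis, in the spirit of the corollary's wording: ``the same framework'' provides comparable domains. A more ambitious attempt would compare each solution with $\Omega_1\cap\Omega_2$, or with a reference domain $B_{R}$ from (RA), but the intersection need not be admissible or stationary, so I would not pursue that. The proof will therefore state the nestedness explicitly and then carry out the short contradiction above.
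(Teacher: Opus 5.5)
Your proposal is correct and follows the same route as the paper, which gives no separate proof beyond the argument in the statement itself. Both solutions have Bernoulli level $c$, and the strict monotonicity of Theorem~\ref{thm:monotonicity-c-riemannian} forbids two distinct nested admissible optimizers at the same level; nestedness is therefore an implicit hypothesis in the paper, which you make explicit, and your remark that the (RC) flux bound with $\kappa_\Psi>1$ already gives $c_{\Omega_1}\geq\kappa_\Psi c_{\Omega_2}>c_{\Omega_2}$ even when $v\equiv u_1$ is a valid small refinement of the paper's Hopf-based chain.
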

\textbf{Conflicts of interests.} The authors declare that there is no conflict of interests.

\end{document}